\definecolor{dunkelgrau}{rgb}{0.8,0.8,0.8}
\definecolor{hellgrau}{rgb}{0.9,0.9,0.9}
\newcommand{\R}{\ensuremath{\mathbb{R}}}
\newcommand{\bx}{\ensuremath{\overline{x}}}
\newcommand{\by}{\ensuremath{\overline{y}}}
\newcommand{\h}{\ensuremath{\mathcal{H}}}
\theoremstyle{plain}
\newtheorem{theorem}{Theorem}
\newtheorem{proposition}[theorem]{Proposition}
\theoremstyle{definition}
\newtheorem{remark}[theorem]{Remark}
\DeclareMathOperator*\dom{dom}%
\DeclareMathOperator*\ri{ri}%
\DeclareMathOperator*\argmin{arg\,min}%
\DeclareMathOperator*\cl{cl}%
\DeclareMathOperator*\id{id}%
\title{On the acceleration of the double smoothing technique for unconstrained convex optimization problems}
\author{Radu Ioan Bo\c t
\thanks {Faculty of Mathematics, Chemnitz University of Technology, D-09107 Chemnitz, Germany, e-mail: radu.bot@mathematik.tu-chemnitz.de. Research partially supported by DFG (German Research Foundation), project BO 2516/4-1.}
\and Christopher Hendrich
\thanks{Faculty of Mathematics, Chemnitz University of Technology, D-09107 Chemnitz, Germany, e-mail: christopher.hendrich@mathematik.tu-chemnitz.de.}
}
\date{\today}
\begin{document}
\maketitle

{\bf Abstract.} In this article we investigate the possibilities of accelerating the double smoothing technique when solving unconstrained nondifferentiable convex optimization problems. This approach relies on the regularization in two steps of the Fenchel dual problem associated to the problem to be solved into an optimization problem having a differentiable strongly convex objective function with Lipschitz continuous gradient. The doubly regularized dual problem is then solved via a fast gradient method. The aim of this paper is to show how do the properties of the functions in the objective of the primal problem influence the implementation of the double smoothing approach and its rate of convergence. The theoretical results are applied to linear inverse problems by making use of different regularization functionals.

{\bf Keywords.} Fenchel duality, regularization, fast gradient method, image processing

{\bf AMS subject classification.} 90C25, 90C46, 47A52

\section{Introduction}\label{sectionIntro}

In this paper we are developing an efficient algorithm based on the double smoothing approach for solving unconstrained nondifferentiable optimization problems of the type
\begin{equation}\label{opt-problem:primal}
\hspace{-1.8cm}(P) \quad \quad \inf_{x \in \h}{\left\{f(x)+g(Ax)\right\}},
\end{equation}
where $\h$ is a Hilbert space, $f:\h \rightarrow \overline{\mathbb{R}}$ and $g:\mathbb{R}^m \rightarrow \overline{\mathbb{R}}$ are proper, convex and lower semicontinuous functions and $A:\h \rightarrow \mathbb{R}^m$ is a linear continuous operator fulfilling the feasibility condition $A(\dom f) \cap \dom g \neq \emptyset$. The double smoothing technique for solving this class of optimization problems (see \cite{BotHendrich12} for a fully finite-dimensional spaces version of it) assumes to efficiently solve the corresponding Fenchel dual problems and then to recover via an approximately optimal solution of the latter an approximately optimal solution of the primal. This technique, which represents a generalization of the approach developed in \cite{NesterovDoubleSmooth, NesterovDoubleSmoothInfinite} for a special class of convex constrained optimization problems,  makes use of the structure of the Fenchel dual and relies on the regularization of the latter in two steps into an optimization problem having a differentiable strongly convex objective function with Lipschitz continuous gradient. The regularized dual is then solved by a fast gradient method which gives rise to a sequence of dual variables that solve the non-regularized dual problem after $O\left(\frac{1}{\epsilon} \ln\left( \frac{1}{\epsilon}\right)\right)$ iterations, whenever $f$ and $g$ have bounded effective domains. In addition, the norm of the gradient of the regularized dual objective decreases by the same rate of convergence, a fact which is crucial in view of reconstructing an approximately optimal solution to $(P)$ after $O\left(\frac{1}{\epsilon} \ln\left( \frac{1}{\epsilon}\right)\right)$ iterations (see \cite{BotHendrich12}). The first aim of this paper is to show that, whenever $g$ is a strongly convex function, one can obtain the same convergence rate, even without imposing boundedness for its effective domain. Further we show that if, additionally, $f$ is strongly convex or $g$ is everywhere differentiable with a Lipschitz continuous gradient, then the convergence rate becomes $O\left(\frac{1}{\sqrt{\epsilon}} \ln\left( \frac{1}{\epsilon}\right)\right)$, while, if these supplementary assumptions are simultaneous fulfilled, then a convergence rate of $O\left(\ln\left( \frac{1}{\epsilon}\right)\right)$ can be guaranteed.

The structure of the paper is the following. The forthcoming section is dedicated  to some preliminaries on convex analysis and Fenchel duality. In Section \ref{sectionDS} we employ the smoothing technique introduced in \cite{NesterovExcessiveGap05, NesterovSmoothMin05,NesterovSmoothing05} in order to make the objective of the Fenchel dual problem of $(P)$ to be strongly convex and differentiable with Lipschitz continuous gradient. In Section \ref{sectionSolving} we first solve the regularized dual problem  via an efficient fast gradient method. Then we show how do the properties of the functions in the objective of $(P)$ influence the implementation of the double smoothing approach and improve its rate of convergence. We also prove how an approximately optimal primal solution can be recovered from a dual iterate. Finally, in Section \ref{sectionExample}, we consider an application of the presented approach in image deblurring and solve to this end by a linear inverse problem by using two different regularization functionals.

\section{Preliminaries on convex analysis and Fenchel duality}\label{sectionPrelimFormulation}

Throughout this paper $\left\langle \cdot ,\cdot \right\rangle$ and $\left\| \cdot \right\| = \sqrt{\left\langle \cdot, \cdot \right\rangle}$ denote the \textit{inner product} and, respectively, the \textit{norm} of the  Hilbert space $\h$, which is allowed to be infinite dimensional.  The \textit{closure} of a set $C \subseteq \h$ is denoted by $\cl(C)$, while its \textit{indicator function} is the function $\delta_C : \h \rightarrow  \overline{\mathbb{R}} := \mathbb{R} \cup \left\{ \pm \infty \right\}$ defined by $\delta_C(x) = 0$ for $x \in C$ and $\delta_C(x) = +\infty$, otherwise. For a function $f: \h \rightarrow \overline{\mathbb{R}}$ we denote by $\dom f := \left\{ x \in \h : f(x) < +\infty \right\}$ its \textit{effective domain}. We call $f$ \textit{proper} if $\dom f \neq \emptyset$ and $f(x)>-\infty$ for all $x \in \h$. The \textit{conjugate function} of $f$ is $f^*:\h \rightarrow \overline{\mathbb{R}}$, $f^*(p)=\sup{\left\{ \left\langle p,x \right\rangle -f(x) : x\in\h \right\}}$ for all $p \in \h$. The \textit{biconjugate function} of $f$ is $f^{**} : \h \rightarrow \overline \R$, $f^{**}(x) = \sup{\left\{ \left\langle x,p \right\rangle -f^*(p) : p\in\h \right\}}$ and, when $f$ is proper, convex and lower semicontinuous, then, according to the Fenchel-Moreau Theorem, one has $f=f^{**}$. The \textit{(convex) subdifferential} of the function $f$ at $x \in \h$ is the set $\partial f(x) = \{p \in \h : f(y) - f(x) \geq \left\langle p,y-x \right\rangle \ \forall y \in \h\}$, if $f(x) \in \R$, and is taken to be the empty set, otherwise.

Further, we consider the space $\R^m$ endowed with the Euclidean inner product and norm, for which we use the same notations as for the Hilbert space $\h$, since no confusion can arise. By $\mathbbm{1}^m$ we denote the vector in $\R^m$ with all entries equal to $1$. For a subset $C$ of $\R^m$ we denote by $\ri(C)$ its  \textit{relative interior}, i.e. the interior of the set $C$ relative to its affine hull. For a linear continuous operator $A: \h \rightarrow \mathbb{R}^m$ the operator $A^*: \mathbb{R}^m \rightarrow \h$, defined by $\left\langle A^*y,x  \right\rangle = \left\langle y,Ax  \right\rangle$ for all $x \in \h$ and all $y \in \mathbb{R}^m$, is its  so-called \textit{adjoint operator}. By $\id : \R^m \rightarrow \R^m, \id(x) = x,$ for all $x \in \R^m$ we denote the \textit{identity mapping} on $\R^m$.

For a nonempty, convex and closed set $C \subseteq \h$ we consider the \textit{projection operator} $\mathcal{P}_C : \h \rightarrow C$ defined as $x \mapsto \argmin_{z\in C}\left\| x-z \right\|$. Having two functions $f,\,g : \h \rightarrow \overline{\mathbb{R}}$, their \textit{infimal convolution} is defined by $f \Box g : \h \rightarrow \overline{\mathbb{R}}$, $(f \Box g) (x) = \inf_{y \in \h}\left\{ f(y) + g(x-y) \right\} $ for all $x \in \h$. The \textit{Moreau envelope} $^{\gamma}f : \h \rightarrow \overline \R$ of the function $f : \h \rightarrow \overline \R$ of parameter $\gamma > 0$ is defined as the infimal convolution
$$^{\gamma}f(x) := f \Box \left(\frac{1}{2\gamma}\left\| \cdot \right\|^2\right)(x) = \inf_{y \in \h} \left \{f(y) + \frac{1}{2\gamma}\|x-y\|^2 \right\} \ \forall x \in \h.$$
For $\rho > 0$ we say that the function $f : \h \rightarrow \overline \R$ is $\rho$-\textit{strongly convex}, if for all $x,y \in \h$ and all $\lambda \in \left(0,1\right)$ it holds
$$f(\lambda x + (1-\lambda)y) \leq \lambda f(x) + (1-\lambda)f(y) - \frac{\rho}{2}\lambda(1-\lambda)\|x-y\|^2.$$ Notice that this is equivalent to saying that $x \mapsto f(x) - \frac{\rho}{2}\|x\|^2$ is convex.

For the optimization problem $(P)$ we consider the following \textit{standing assumptions}: $f:\h \rightarrow \overline{\mathbb{R}}$ is a proper, convex and lower semicontinuous function with a bounded effective domain, $g:\mathbb{R}^m \rightarrow \overline{\mathbb{R}}$ is proper, $\mu$-strongly convex ($\mu > 0$) and lower semicontinuous function and $A:\h \rightarrow \mathbb{R}^m$ is a linear operator fulfilling $A(\dom f) \cap \dom g \neq \emptyset$.

\begin{remark}
\label{double-smooth-remark-domg-unbounded}
Different to the investigations made in \cite{BotHendrich12} in a fully finite-dimensional setting, we strengthen here the convexity assumptions on $g$ (there $g$ was asked to be only proper, convex and lower semicontinuous), but allow in counterpart $\dom g$ to be unbounded.
\end{remark}

The Fenchel dual problem to $(P)$ (see, for instance, \cite{Bot10,BotGradWanka09}) reads
\begin{equation}\label{opt-problem:dual}
\hspace{-1.8cm}(D) \quad \quad \sup_{p \in \mathbb{R}^m}{ \left\{ -f^*(A^*p)-g^*(-p) \right\} }.
\end{equation}
We denote the optimal objective values of the optimization problems $(P)$ and $(D)$ by $v(P)$ and $v(D)$, respectively.

The conjugate functions of $f$ and $g$ can be written as
\begin{align*}
	f^*(q)
	= \sup_{x \in \dom f}{ \left\{ \left\langle  q,x \right\rangle  -f(x) \right\} }
	= -\inf_{x \in \dom f}{ \left\{ \left\langle -q,x \right\rangle  +f(x) \right\} } \ \forall q \in \h
\end{align*}
and
\begin{align*}
	g^*(p)
	= \sup_{x \in \dom g}{ \left\{ \left\langle  p,x \right\rangle  -g(x) \right\} }
	= -\inf_{x \in \dom g}{ \left\{ \left\langle  -p,x \right\rangle  +g(x) \right\} } \ \forall p \in \R^m,
\end{align*}
respectively. According to \cite[Theorem 11.9]{BauschkeCombettes11} and \cite[Lemma 2.33]{BonnansShapiro00}, the optimization problems arising in the formulation of both $f^*(q)$ for all $q \in \h$ and $g^*(p)$ for all $p \in \R^m$ are solvable, fact which implies that $\dom f^* = \h$ and $\dom g^* = \mathbb{R}^m$, respectively.

By writing the dual problem $(D)$ equivalently as the infimum optimization problem
$$\inf_{p \in \R^m} \{f^*(A^*p) + g^*(-p)\},$$
one can easily see that the Fenchel dual problem of the latter is
$$\sup_{x \in \h} \{-f^{**}(x) - g^{**}(Ax)\}, $$
which, by the Fenchel-Moreau Theorem, is nothing else than
$$\sup_{x \in \h} \{-f(x) - g(Ax)\}.$$
In order to guarantee strong duality for this primal-dual pair it is sufficient to ensure that (see, for instance, \cite[Theorem 2.1]{Bot10}) $0 \in \ri(A^*(\dom g^*) + \dom f^*)$. As $f^*$ has full domain, this regularity condition is automatically fulfilled, which means that $v(D) = v(P)$ and the primal optimization problem $(P)$ has an optimal solution. Due to the fact that $f$ and $g$ are proper and $A(\dom f) \cap \dom g \neq \emptyset$, this further implies $v(D)=v(P) \in \mathbb{R}$. Later we will assume that the dual problem $(D)$ has an optimal solution, too, and that an upper bound of its norm is known.

Denote by $\theta:\mathbb{R}^m \rightarrow \mathbb{R}$, $\theta(p)=f^*(A^*p)+g^*(-p)$, the objective function of $(D)$. Hence, the dual can be equivalently written as
\begin{equation}
\label{opt-problem:dual-in-minimization-form}
\hspace{-1.8cm}(D) \quad \quad -\inf_{p \in \mathbb{R}^m}{ \theta(p) }.
\end{equation}
The assumptions made on $g$ yields that $p \mapsto g^*(-p)$ is differentiable and has a Lipschitz continuous gradient (see Subsection \ref{subsectionFirstSmooth} for details). However, since in general one can not guarantee the smoothness of $p \mapsto f^*(A^*p)$, the dual problem $(D)$ is a nondifferentiable convex optimization problem. Our goal is to solve this problem efficiently and to obtain from here an optimal solution to $(P)$. As in \cite{BotHendrich12}, we are overcoming the non-satisfactory complexity of subgradient-schemes, i.\,e. $O\left(\frac{1}{\epsilon^2}\right)$, by making use of smoothing techniques introduced in \cite{NesterovExcessiveGap05,NesterovSmoothMin05,NesterovSmoothing05}. More precisely, we regularize first the objective function of $f^*(A^*p)$ by a quadratic term in order to obtain a smooth approximation of $p \mapsto f^*(A^*p)$. Then we apply a second regularization to the new dual objective and minimize the regularized problem via an appropriate fast gradient scheme (see \cite{BotHendrich12}). This will allow us to solve both optimization problems $(D)$ and $(P)$ approximately in $O\left(\frac{1}{\epsilon}\ln\left(\frac{1}{\epsilon}\right)\right)$ iterations. More than that, we will show that this rate of convergence can be improved when strengthening the assumptions imposed on $f$ and $g$.

\section{The double smoothing approach}\label{sectionDS}

\subsection{First smoothing}\label{subsectionFirstSmooth}
For a real number $\rho > 0$ the function $p \mapsto f^*(A^*p) = \sup_{x \in \h}{ \left\{ \left\langle  A^*p,x \right\rangle  -f(x) \right\}}$ can be approximated by
\begin{equation}
	\label{opt-problem:f*rho}
	f_{\rho}^*(A^*p) = \sup_{x \in \h}{ \left\{ \left\langle  A^*p,x \right\rangle  -f(x) - \frac{\rho}{2} \left\| x \right\|^2 \right\} }.
\end{equation}
For each $p \in \R^m$ the maximization problem which occurs in the formulation of $f_{\rho}^*(A^*p)$ has a unique solution (see, for instance, \cite[Proposition 11.14]{BauschkeCombettes11}), fact which implies that $f_{\rho}^*(A^*p) \in \R$.

For all $p \in \mathbb{R}^m$ one can express the above regularization of the conjugate by means the Moreau envelope of $f$ as follows
\begin{align*}
	-f_{\rho}^*(A^*p) &= -\sup_{x \in \h}{ \left\{ \left\langle  A^*p,x \right\rangle  -f(x) -\frac{\rho}{2} \left\| x \right\|^2 \right\}} \\
	&= \inf_{x \in \h}{ \left\{ -\left\langle  A^*p,x \right\rangle  +f(x) +\frac{\rho}{2} \left\| x \right\|^2 \right\}} \\
	&= \inf_{x \in \h}{ \left\{ f(x)  +\frac{\rho}{2} \left\| \frac{A^*p}{\rho} -x \right\|^2 \right\}} -\frac{\left\| A^*p\right\|^2}{2\rho}
	= {}^{\frac{1}{\rho}}f\left(\frac{A^*p}{\rho}\right) -\frac{\left\| A^*p\right\|^2}{2\rho}.
\end{align*}
Consequently, one can transfer the differentiability properties of the  Moreau envelope (see \cite[Proposition 12.29]{BauschkeCombettes11}) to $p \mapsto -(f_{\rho}^*\circ A^*)(p)$. For all $p \in \R^m$ we have
\begin{align*}
	- \nabla (f_{\rho}^*\circ A^*)(p) = \frac{A}{\rho}\, \nabla\, {}^{\frac{1}{\rho}}f\left(\frac{A^*p}{\rho}\right) -\frac{AA^*p}{\rho}
	= \frac{A}{\rho}\left( \rho \left( \frac{A^*p}{\rho} - x_{f,p} \right) \right)  -\frac{AA^*p}{\rho}
	= -Ax_{f,p},
\end{align*}
thus
\begin{align*}
	\nabla (f_{\rho}^*\circ A^*)(p) = Ax_{f,p},
\end{align*}
where $x_{f,p} \in \h$ is the \textit{proximal point} of parameter $\tfrac{1}{\rho}$ of $f$ at $\frac{A^*p}{\rho}$, namely the unique element in $\h$ fulfilling (see \cite[Proposition 12.29]{BauschkeCombettes11})
$${}^{\frac{1}{\rho}}f\left(\frac{A^*p}{\rho}\right) =  f(x_{f,p})  +\frac{\rho}{2} \left\| \frac{A^*p}{\rho} -x_{f,p} \right\|^2.$$
By taking into account the nonexpansiveness of the proximal point mapping (see \cite[Proposition 12.27]{BauschkeCombettes11}), for $p,q \in \mathbb{R}^m$ it holds
\begin{align*}
	\left\| \nabla (f_{\rho}^*\circ A^*)(p) - \nabla (f_{\rho}^*\circ A^*)(q) \right\| & = \left\| Ax_{f,p} - Ax_{f,q} \right\| \leq \left\| A \right\| \left\| x_{f,p} - x_{f,q} \right\| \\
	& \leq \left\| A \right\| \left\| \frac{A^*p}{\rho} - \frac{A^*q}{\rho} \right\| \leq \frac{\left\| A \right\|^2}{\rho} \left\| p - q \right\|,
\end{align*}
thus $\frac{\left\| A \right\|^2}{\rho}$ is the Lipschitz constant of $p \mapsto \nabla (f_{\rho}^*\circ A^*)(p)$.

Coming now to the function $p \mapsto g^*(-p) = (g^* \circ -\id)(p)$, let us notice first that, since $g$ is proper, $\mu$-strongly convex and lower semicontinous, $g^*$ is differentiable and $\nabla g^*$ is Lipschitz continuous with Lipschitz constant $\tfrac{1}{\mu}$. Thus $(g^* \circ -\id)$
is Fr\'echet differentiable, too, and its gradient is Lipschitz continuous with Lipschitz constant $\tfrac{1}{\mu}$. By denoting
$$x_{g,p} := \nabla g^*(-p) = -\nabla (g^* \circ -\id)(p),$$
one has that $-p \in \partial g(x_{g,p})$ or, equivalently, $0 \in \partial (\langle p,\cdot \rangle + g)(x_{g,p})$, which means that $x_{g,p}$ is the unique optimal solution (see \cite[Lemma 2.33]{BonnansShapiro00}) of the optimization problem
$$\inf_{x \in \R^m} \{\langle p,x\rangle + g(x)\}.$$

\begin{remark}
\label{remarkRegularizationUnnecessary}
If $f$ is $\rho$-strongly convex, for $\rho > 0$, then there is no need to apply the first regularization for $p \mapsto f^*(A^*p)$, as this function is already Fr\'echet differentiable with a Lipschitz continuous gradient having a Lipschitz constant given by $\frac{\left\| A \right\|^2}{\rho}$. Indeed, the $\rho$-strong convexity of $f$ implies that $f^*$ is Fr\'{e}chet differentiable with Lipschitz continuous gradient having a Lipschitz constant given by $\frac{1}{\rho}$ (see \cite[Theorem 18.15]{BauschkeCombettes11}). Hence, for all $p, q \in \R^m$, we have
\begin{align*}
	\left\| \nabla (f^*\circ A^*)(p) - \nabla (f^*\circ A^*)(q) \right\|
	&= \left\| A \nabla f^*(A^*p) - A \nabla f^*(A^*q) \right\| \\
	&\leq \frac{\left\| A \right\|}{\rho} \left\| A^*p - A^*q \right\|
	\leq \frac{\left\| A \right\|^2}{\rho}  \left\| p - q \right\|.
\end{align*}
By denoting
$$x_{f,p} :=  \nabla f^*(A^*p),$$
one has that $0 \in \partial (f - \langle A^*p,\cdot \rangle)(x_{f,p})$, which means that $x_{f,p}$ is the unique optimal solution (see \cite[Lemma 2.33]{BonnansShapiro00}) of the optimization problem
$$\inf_{x \in \h} \{f(x) - \langle A^*p,x\rangle\}.$$
\end{remark}

By denoting $D_f:=\sup\left\{ \frac{\left\| x \right\|^2}{2} : x \in \dom f  \right\} \in \R$ we can relate $f^* \circ A^*$ and its smooth approximation $f_{\rho}^* \circ A^*$ as follows.
\begin{proposition}
	\label{corollary:f*rho and g*mu inequality}
	For all $p\in\mathbb{R}^m$ it holds
	\begin{align*}
		f_{\rho}^*(A^*p) &\leq f^*(A^*p) \leq f_{\rho}^*(A^*p) + \rho D_f.
\end{align*}
\end{proposition}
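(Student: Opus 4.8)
The plan is to derive both inequalities directly from the definition \eqref{opt-problem:f*rho} of $f_{\rho}^*$ by comparing the two supremum problems that define $f^*(A^*p)$ and $f_{\rho}^*(A^*p)$. The key observation is that the two suprema have the \emph{same} feasible set --- namely $\dom f$, since outside $\dom f$ both objectives are $-\infty$ --- and their objectives differ only by the nonnegative penalty $\tfrac{\rho}{2}\|x\|^2$.

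For the left inequality, I would argue pointwise: for every $x \in \h$ one has $\langle A^*p,x\rangle - f(x) - \tfrac{\rho}{2}\|x\|^2 \le \langle A^*p,x\rangle - f(x)$, since $\tfrac{\rho}{2}\|x\|^2 \ge 0$. Taking the supremum over $x \in \h$ on both sides yields $f_{\rho}^*(A^*p) \le f^*(A^*p)$ immediately.

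For the right inequality, I would restrict attention to $x \in \dom f$ (the supremum defining $f^*(A^*p)$ is attained there, and on $\h \setminus \dom f$ the objective is $-\infty$). For such $x$, by the definition of $D_f$ we have $\tfrac{\rho}{2}\|x\|^2 \le \rho D_f$, hence $\langle A^*p,x\rangle - f(x) \le \bigl(\langle A^*p,x\rangle - f(x) - \tfrac{\rho}{2}\|x\|^2\bigr) + \rho D_f \le f_{\rho}^*(A^*p) + \rho D_f$. Taking the supremum over $x \in \dom f$ on the left gives $f^*(A^*p) \le f_{\rho}^*(A^*p) + \rho D_f$.

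There is no real obstacle here; the only point requiring a word of care is that $D_f$ is a genuine real number (not $+\infty$), which is guaranteed by the standing assumption that $\dom f$ is bounded, and that the supremum in $f^*(A^*p)$ may be taken over $\dom f$ rather than all of $\h$, which is clear since $f$ is proper. Both bounds then follow by the monotonicity of the supremum.
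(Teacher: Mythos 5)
Your proof is correct and follows essentially the same route as the paper: both inequalities are obtained by comparing the two suprema over $\dom f$, dropping the nonnegative penalty $\tfrac{\rho}{2}\|x\|^2$ for the lower bound and bounding it by $\rho D_f$ for the upper bound. The only cosmetic difference is that the paper evaluates the first supremum at its (attained) maximizer $x_{f,p}$ and writes the upper bound as $\sup(a+b)\le\sup a+\sup b$, whereas you argue pointwise throughout; the substance is identical.
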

\begin{proof} For $p \in \mathbb{R}^m$ one has
\begin{align*}
		f_{\rho}^*(A^*p) &= \left\langle  A^*p,x_{f,p} \right\rangle  -f(x_{f,p}) - \frac{\rho}{2} \left\| x_{f,p} \right\|^2
											\leq \left\langle  A^*p,x_{f,p} \right\rangle  -f(x_{f,p}) \leq  f^*(A^*p) \\
									&\leq \sup_{x \in \dom f}{ \left\{ \left\langle  A^*p,x \right\rangle  -f(x) -\frac{\rho}{2} \left\| x \right\|^2 \right\} } + \sup_{x \in \dom f}{ \left\{\frac{\rho}{2} \left\| x \right\|^2\right\} } \\
									&= f_{\rho}^*(A^*p) + \rho D_f.
\end{align*}
\end{proof}

For $\rho >0$ let $\theta_{\rho}:\mathbb{R}^m \rightarrow \mathbb{R}$ be defined by $\theta_{\rho}(p)=f_{\rho}^*(A^*p) + g^*(-p)$. The function $\theta_{\rho}$ is differentiable with a Lipschitz continuous gradient
$$\nabla \theta_{\rho}(p) = \nabla(f_{\rho}^*\circ A^*)(p) + \nabla (g^* \circ -\id)(p)= Ax_{f,p} - x_{g,p} \ \forall p \in \R^m,$$
having as Lipschitz constant $L(\rho) := \frac{\left\| A \right\|^2}{\rho} + \frac{1}{\mu}$.

In consideration of Proposition \ref{corollary:f*rho and g*mu inequality} we get
\begin{equation}
		\label{inequality:theta-rho-mu and theta relation}
		\theta_{\rho}(p) \leq \theta (p) \leq \theta_{\rho}(p) + \rho D_f \quad \forall p \in \mathbb{R}^m.
\end{equation}

In order to reconstruct an approximately optimal solution to the primal optimization problem $(P)$ it is not sufficient to ensure the convergence of $\theta(\cdot)$ to $-v(D)$, but we also need good convergence properties for the decrease of $\left\| \nabla \theta_{\rho}(\cdot) \right\|$ (cf. \cite{BotHendrich12, NesterovDoubleSmooth}).

\subsection{Second smoothing}\label{subsectionSecondSmoothing}

In the following, a second regularization is applied to $\theta_{\rho}$, as done in \cite{BotHendrich12, NesterovDoubleSmooth, NesterovDoubleSmoothInfinite}, in order to make it strongly convex, fact which will allow us to use a fast gradient scheme with a good convergence rate for the decrease of $\left\| \nabla \theta_{\rho}(\cdot) \right\|$. Therefore, adding the strongly convex function $\frac{\kappa}{2} \left\| \cdot \right\|^2$ to $\theta_{\rho}$, for some positive real number $\kappa$, gives rise to the following regularization of the objective function
\begin{equation*}
\theta_{\rho,\kappa} : \R^m \rightarrow \R, \ \theta_{\rho,\kappa}(p) := \theta_{\rho}(p) + \frac{\kappa}{2} \left\| p \right\|^2
																= f_{\rho}^*(A^*p) + g^*(-p)+ \frac{\kappa}{2} \left\| p \right\|^2,
\end{equation*}
which is obviously $\kappa$-strongly convex. We further deal with the optimization problem
\begin{equation}
		\label{opt-problem:second-regularization}
		\inf_{p \in \mathbb{R}^m}{\theta_{\rho,\kappa}(p)}.
\end{equation}
By taking into account \cite[Lemma 2.33]{BonnansShapiro00}, the optimization problem \eqref{opt-problem:second-regularization} has a unique optimal solution, while the function $\theta_{\rho,\kappa}$ is differentiable and for all $p \in \R^m$ it holds
\begin{align*}
	\nabla \theta_{\rho,\kappa}(p) 	= \nabla \left( \theta_{\rho} (\cdot)+ \frac{\kappa}{2} \left\| \cdot \right\|^2 \right) (p)
																			= Ax_{f,p} - x_{g,p} + \kappa p.
\end{align*}
This gradient is Lipschitz continuous with constant $L(\rho,\kappa):=\frac{\left\| A \right\|^2}{\rho} + \frac{1}{\mu} + \kappa$.
\begin{remark}
\label{remarkSecondRegularizationUnnecessary}
If $\theta_{\rho}$ is $\kappa$-strongly convex, then there is no need to apply the second regularization, as this function is already endowed with the properties of $\theta_{\rho,\kappa}$.
\end{remark}

\section{Solving the doubly regularized dual problem}\label{sectionSolving}

\subsection{A fast gradient method}

In the forthcoming sections we denote by $p_{DS}^*$ the unique optimal solution of the optimization problem \eqref{opt-problem:second-regularization} and by $\theta_{\rho,\kappa}^* := \theta_{\rho,\kappa}(p_{DS}^*)$ its optimal objective value. Further, we denote by $p^* \in \R^m$ an optimal solution to the dual optimization problem $(D)$ and we assume that the upper bound
\begin{equation}
		\label{opt-problem:dual-solution-upper-bound}
		\left\| p^* \right\| \leq R																					
\end{equation}
is available for some nonzero $R \in \R_+$.

Furthermore, we make use of the following fast gradient method (see \cite[Algorithm 2.2.11]{Nesterov04})
\begin{eqnarray}
		\label{doublesmooth:opt-sheme}
		\text{Init.:}& &\text{Set}\ w_0 = p_0:=0 \in \mathbb{R}^m \notag \\
		\text{For } k\geq 0:& &\text{Set}\ p_{k+1} := w_k - \frac{1}{L(\rho,\kappa)}\nabla \theta_{\rho,\kappa}(w_k).\\
			& &\text{Set}\ w_{k+1} := p_{k+1} + \frac{\sqrt{L(\rho,\kappa)} - \sqrt{\kappa}}{\sqrt{L(\rho,\kappa)} + \sqrt{\kappa}} (p_{k+1}-p_k) \notag
\end{eqnarray}
for minimizing the optimization problem \eqref{opt-problem:second-regularization}, which has a strongly convex and differentiable optimization function with a Lipschitz continuous gradient. By taking into account \cite[Theorem 2.2.3]{Nesterov04} we obtain a sequence $(p_k)_{k \geq 0} \subseteq \R^m$ satisfying
\begin{align}
		\theta_{\rho,\kappa}(p_k) - \theta_{\rho,\kappa}^*
		&\leq \left( \theta_{\rho,\kappa}(p_0) - \theta_{\rho,\kappa}^* + \frac{\kappa}{2} \left\| p_0 -p_{DS}^* \right\|^2\right)
					\left( 1-\sqrt{\frac{\kappa}{L(\rho,\kappa)}} \right)^k \notag \\
		\label{opt-scheme:objective-function1}
		&\leq  (\theta_{\rho,\kappa}(p_0) - \theta_{\rho,\kappa}^* + \frac{\kappa}{2} \left\| p_0 -p_{DS}^* \right\|^2)
																																		\,\text{e}^{ -k \sqrt{\frac{\kappa}{L(\rho,\kappa)}} } \\
		\label{opt-scheme:objective-function2}																																
		&\leq  2 (\theta_{\rho,\kappa}(p_0) - \theta_{\rho,\kappa}^*)
																																		\,\text{e}^{ -k \sqrt{\frac{\kappa}{L(\rho,\kappa)}} } \ \forall k \geq 0,
\end{align}
while the last inequality is a consequence of \cite[Theorem 2.1.8]{Nesterov04}. Since $p_{DS}^*$ solves \eqref{opt-problem:second-regularization}, we have $\nabla \theta_{\rho,\kappa}(p_{DS}^*)=0$ and therefore \cite[Theorem 2.1.5]{Nesterov04} yields
\begin{align*}
		\frac{1}{2 L(\rho,\kappa)}\left\| \nabla \theta_{\rho,\kappa}(p_k) \right\|^2
		\leq 	\theta_{\rho,\kappa}(p_k) - \theta_{\rho,\kappa}^*
		\overset{\eqref{opt-scheme:objective-function2}}{\leq} 2 (\theta_{\rho,\kappa}(p_0) - \theta_{\rho,\kappa}^*)
\text{e}^{ -k \sqrt{\frac{\kappa}{L(\rho,\kappa)}}},																																
\end{align*}
which implies
\begin{align}
		\label{opt-scheme:norm-of-gradient}
		\left\| \nabla \theta_{\rho,\kappa}(p_k) \right\|^2
		\leq 	4 L(\rho,\kappa) (\theta_{\rho,\kappa}(p_0) - \theta_{\rho,\kappa}^*)
					\text{e}^{ -k \sqrt{\frac{\kappa}{L(\rho,\kappa)}} }  \ \forall k \geq 0.																																
\end{align}
Due to the $\kappa$-strong convexity of $\theta_{\rho,\kappa}$, \cite[Theorem 2.1.8]{Nesterov04} states
\begin{align}
	\label{opt-scheme:norm-of-arguments-additional}
	\frac{\kappa}{2} \left\| p_k -p_{DS}^* \right\|^2
	\leq \theta_{\rho,\kappa}(p_k) - \theta_{\rho,\kappa}^*
	\overset{\eqref{opt-scheme:objective-function2}}{\leq} 2 (\theta_{\rho,\kappa}(p_0) - \theta_{\rho,\kappa}^*)
\text{e}^{ -k\sqrt{\frac{\kappa}{L(\rho,\kappa)}} }  \ \forall k \geq 0.
\end{align}
Using this inequality it follows that (see also \cite{NesterovDoubleSmooth,NesterovDoubleSmoothInfinite})
\begin{equation}
		\label{opt-scheme:norm-of-arguments}
		\left\| p_k - p_{DS}^* \right\|^2
		\leq 	\min{ \left\{ \left\| p_0 - p_{DS}^* \right\|^2, \frac{4}{\kappa} (\theta_{\rho,\kappa}(p_0) - \theta_{\rho,\kappa}^*)
					\text{e}^{ -k \sqrt{\frac{\kappa}{L(\rho,\kappa)}} }\right\} }	\ \forall k \geq 0.																				
\end{equation}
We first prove that the rates of convergence for the decrease of $\theta(p_k) -\theta(p^*)$ and $\left\| \nabla \theta_{\rho}(p_k) \right\|$ coincide, being equal to $O\left(\frac{1}{\epsilon} \ln \left(\frac{1}{\epsilon}\right)\right)$, and that they can be improved when $f$ and/or $g$ fulfill additional assumptions. We also show how $\epsilon$-optimal solutions to the primal problem $(P)$ can be recovered from the sequence of dual variables $(p_k)_{k \geq 0}$.

\subsection{Convergence of \texorpdfstring{$\theta(p_k)$}{theta(pk)} to \texorpdfstring{$\theta(p^*)$}{theta(p*)}}\label{subsectionConvTheta}

Since the algorithm starts with $p_0 = 0$, we have $\theta_{\rho,\kappa}(0)=f_{\rho}^*(0)+g^*(0)+\frac{\kappa}{2} \left\| 0 \right\|^2 = \theta_{\rho}(0)$, while
\begin{align}
	\label{inequality:theta-rho-mu-kappa in p_{DS}^*}
	\theta_{\rho,\kappa}(p_{DS}^*) = \theta_{\rho}(p_{DS}^*) + \frac{\kappa}{2} \left\| p_{DS}^* \right\|^2.
\end{align}
Making use of these two relations we obtain
\begin{align*}
		\frac{\kappa}{2} \left\| p_{DS}^* \right\|^2
		&\overset{\eqref{opt-scheme:norm-of-arguments-additional}}{\leq} \theta_{\rho,\kappa}(0) - \theta_{\rho,\kappa}(p_{DS}^*)																									 = \theta_{\rho}(0) - \theta_{\rho}(p_{DS}^*) - \frac{\kappa}{2} \left\| p_{DS}^* \right\|^2,
\end{align*}
which further implies that
\begin{align}
		\label{inequality:norm-of-p_DS^*}
		\left\| p_{DS}^* \right\|^2 &\leq \frac{1}{\kappa} \left( \theta_{\rho}(0) - \theta_{\rho}(p_{DS}^*) \right).		
\end{align}
Additionally, in all iterations $k \geq 0$, we have
\begin{align}
		\left\| p_k - p_{DS}^* \right\|^2
		&\overset{\eqref{opt-scheme:norm-of-arguments-additional}}{\leq} \frac{2}{\kappa} \left( \theta_{\rho,\kappa}(p_k) - \theta_{\rho,\kappa}(p_{DS}^*) \right) \notag \\
		&\overset{\eqref{opt-scheme:objective-function1}}{\leq} \frac{2}{\kappa} \left( \theta_{\rho,\kappa}(0) - \theta_{\rho,\kappa}(p_{DS}^*) + \frac{\kappa}{2} \left\| 0 - p_{DS}^* \right\|^2 \right) \text{e}^{ -k \sqrt{\frac{\kappa}{L(\rho,\kappa)}} } \notag \\
		&\overset{\eqref{inequality:theta-rho-mu-kappa in p_{DS}^*}}{=} \frac{2}{\kappa} \left( \theta_{\rho}(0) - \theta_{\rho}(p_{DS}^*) \right) \text{e}^{ -k \sqrt{\frac{\kappa}{L(\rho,\kappa)}} }
		\label{inequality:norm-of-p_k-minus-p_DS^*}
\end{align}
and
\begin{align}
	\label{inequality:theta-rho-mu estimate}
		\theta_{\rho}(p_k) - \theta_{\rho}(p_{DS}^*)
		&\overset{\eqref{opt-scheme:objective-function1}}{\leq} \left( \theta_{\rho,\kappa}(0) - \theta_{\rho,\kappa}(p_{DS}^*) + \frac{\kappa}{2} \left\| 0 - p_{DS}^* \right\|^2 \right)
						\text{e}^{ -k \sqrt{\frac{\kappa}{L(\rho,\kappa)}} } \notag \\
						& \hspace{0.8cm}+ \frac{\kappa}{2} \left( \left\| p_{DS}^* \right\|^2
						- \left\| p_k \right\|^2	\right) \notag \\
		&\overset{\eqref{inequality:theta-rho-mu-kappa in p_{DS}^*}}{=} \left( \theta_{\rho}(0) - \theta_{\rho}(p_{DS}^*)  \right)
						\text{e}^{ -k \sqrt{\frac{\kappa}{L(\rho,\kappa)}} } + \frac{\kappa}{2} \left( \left\| p_{DS}^* \right\|^2
						- \left\| p_k \right\|^2	\right).																					
\end{align}
The estimation
\begin{eqnarray*}
		\left\| p_{DS}^* \right\|^2 - \left\| p_k \right\|^2
		&=& \left(\left\| p_{DS}^* \right\| - \left\| p_k \right\|\right)\left(\left\| p_{DS}^* \right\| + \left\| p_k \right\|\right) \\
		&\leq& \left\| p_{DS}^* - p_k \right\| \left( \left\| p_{DS}^* \right\| + \left\| p_k \right\| \right) \\
		&\leq& \left\| p_{DS}^* - p_k \right\| \left( 2\left\| p_{DS}^* \right\| + \left\| p_k - p_{DS}^* \right\| \right) \\
		&\overset{\eqref{opt-scheme:norm-of-arguments}}{\leq}& 3 \left\| p_{DS}^* - p_k \right\| \left\| p_{DS}^*\right\| \\
		&\overset{\eqref{inequality:norm-of-p_k-minus-p_DS^*}}{\leq}& 3 \left\| p_{DS}^* \right\|
							\sqrt{\frac{2}{\kappa} (\theta_{\rho}(0) - \theta_{\rho}(p_{DS}^*))} \, \text{e}^{ -\frac{k}{2} \sqrt{\frac{\kappa}{L(\rho,\kappa)}} } \\
		&\overset{\eqref{inequality:norm-of-p_DS^*}}{\leq}& 	\frac{3 \sqrt{2}}{\kappa} (\theta_{\rho}(0) - \theta_{\rho}(p_{DS}^*)) \, \text{e}^{ -\frac{k}{2} \sqrt{\frac{\kappa}{L(\rho,\kappa)}} }																					 \end{eqnarray*}
can now be inserted into \eqref{inequality:theta-rho-mu estimate} and this leads to
\begin{align}
		\label{inequality:theta-rho-mu estimate2}
		\theta_{\rho}(p_k) - \theta_{\rho}(p_{DS}^*)
		&\leq \left( \theta_{\rho}(0) - \theta_{\rho}(p_{DS}^*)  \right)
						\left( \text{e}^{ -k \sqrt{\frac{\kappa}{L(\rho,\kappa)}}} + \frac{3}{\sqrt{2}} \, \text{e}^{ -\frac{k}{2} \sqrt{\frac{\kappa}{L(\rho,\kappa)}}} \right) \notag \\
		&\leq \frac{25}{8} \left( \theta_{\rho}(0) - \theta_{\rho}(p_{DS}^*)  \right) \text{e}^{ -\frac{k}{2} \sqrt{\frac{\kappa}{L(\rho,\kappa)}}}  \ \forall k \geq 0.																		 
\end{align}
Further, we have $\theta_{\rho}(0) \overset{\eqref{inequality:theta-rho-mu and theta relation}}{\leq} \theta(0)$, $\theta_{\rho}(p_{DS}^*) \overset{\eqref{inequality:theta-rho-mu and theta relation}}{\geq} \theta(p_{DS}^*) - \rho D_f	\geq \theta(p^*) - \rho D_f$ and, from here,
\begin{align}
		\label{inequality:theta-0-p^*-estimate}
		\theta_{\rho}(0) - \theta_{\rho}(p_{DS}^*)
		\leq \theta(0) - \theta(p^*) + \rho D_f.
\end{align}
Since $\theta_{\rho}(p_{DS}^*) \leq \theta_{\rho}(p_{DS}^*) + \frac{\kappa}{2} \left\| p_{DS}^* \right\|^2 \leq \theta_{\rho}(p^*) + \frac{\kappa}{2} \left\| p^* \right\|^2 $, we obtain that
\begin{align*}
		\theta_{\rho}(p_{DS}^*) \leq \theta_{\rho}(p^*) + \frac{\kappa}{2} \left\| p^* \right\|^2
		\overset{\eqref{inequality:theta-rho-mu and theta relation}}{\leq} \theta(p^*) + \frac{\kappa}{2} \left\| p^* \right\|^2
\end{align*}
and, therefore,
\begin{align}
		\label{inequality:theta-estimate1}
		\theta_{\rho}(p_k) - \theta_{\rho}(p_{DS}^*)
		\overset{\eqref{inequality:theta-rho-mu and theta relation}}{\geq} \theta(p_k) - \rho D_f - \theta(p^*) - \frac{\kappa}{2} \left\| p^* \right\|^2 \ \forall k \geq 0.
\end{align}
In conclusion we obtain for all $k\geq 0$
\begin{eqnarray}
		\label{inequality:theta-estimate2}
		\theta(p_k) - \theta(p^*) &\overset{\eqref{inequality:theta-estimate1}}{\leq}&
															\rho D_f + \frac{\kappa}{2} \left\| p^* \right\|^2 + \theta_{\rho}(p_k) - \theta_{\rho}(p_{DS}^*) \notag \\
		&\overset{\eqref{opt-problem:dual-solution-upper-bound}, \eqref{inequality:theta-rho-mu estimate2}}{\leq}&
				\rho D_f + \frac{\kappa}{2} R^2 + \frac{25}{8} \left( \theta_{\rho}(0) - \theta_{\rho}(p_{DS}^*)  \right) \text{e}^{ -\frac{k}{2} \sqrt{\frac{\kappa}{L(\rho,\kappa)}}} \notag \\
		&\overset{\eqref{inequality:theta-0-p^*-estimate}}{\leq}&
				\rho D_f + \frac{\kappa}{2} R^2 + \frac{25}{8} \left( \theta(0) - \theta(p^*) + \rho D_f  \right) \text{e}^{ -\frac{k}{2} \sqrt{\frac{\kappa}{L(\rho,\kappa)}}}.
\end{eqnarray}
Next we fix $\epsilon > 0$. In order to get $\theta(p_k) - \theta(p^*) \leq \epsilon$ after a certain amount of iterations $k$, we force all three terms in \eqref{inequality:theta-estimate2} to be less than or equal to $\frac{\epsilon}{3}$. To this end we choose first
\begin{align}
	\label{opt-problem-smoothing-parameters}
	\rho := \rho(\epsilon) = \frac{\epsilon}{3 D_f} \ \mbox{and} \	\kappa := \kappa(\epsilon) = \frac{2\epsilon}{3 R^2}.
\end{align}
With these new parameters we can simplify \eqref{inequality:theta-estimate2} to
\begin{equation*}
		\theta(p_k) - \theta(p^*) \leq
		\frac{2 \epsilon}{3} + \frac{25}{8} \left( \theta(0) - \theta(p^*) + \frac{\epsilon}{3}  \right) \text{e}^{ -\frac{k}{2} \sqrt{\frac{\kappa}{L(\rho,\kappa)}}} \ \forall k \geq 0,
\end{equation*}
thus, the second term in the expression on the right-hand side of the above estimate determines the number of iterations needed to obtain $\epsilon$-accuracy for the dual objective function $\theta$. Indeed, we have
\begin{align}
		\frac{\epsilon}{3} &\geq \frac{25}{8} \left( \theta(0) - \theta(p^*) + \frac{\epsilon}{3}  \right) \text{e}^{ -\frac{k}{2} \sqrt{\frac{\kappa}{L(\rho,\kappa)}}} \notag \\
		\Leftrightarrow \text{e}^{ \frac{k}{2} \sqrt{\frac{\kappa}{L(\rho,\kappa)}}} &\geq  \frac{3}{\epsilon} \cdot \frac{25}{8} \left( \theta(0) - \theta(p^*) + \frac{\epsilon}{3}  \right) \notag \\
		\Leftrightarrow \frac{k}{2} \sqrt{\frac{\kappa}{L(\rho,\kappa)}} &\geq \ln{\left( \frac{75\left( \theta(0) - \theta(p^*) + \frac{\epsilon}{3}  \right)}{8 \epsilon}  \right)} \notag \\
		\Leftrightarrow k &\geq 2 \sqrt{\frac{L(\rho,\kappa)}{\kappa}} \ln{\left( \frac{75\left( \theta(0) - \theta(p^*) + \frac{\epsilon}{3}  \right)}{8 \epsilon}  \right)}.  \label{doublesmooth-estimation-theta}
\end{align}
Noticing that
\begin{eqnarray*}
		\frac{L(\rho,\kappa)}{\kappa}
		= \frac{\left\| A \right\|^2}{\rho \kappa} + \frac{1}{\mu \kappa} + 1
		& \overset{\eqref{opt-problem-smoothing-parameters}} {=} & \frac{9 \left\| A \right\|^2 D_f R^2}{2\epsilon^2} + \frac{3 R^2}{2\mu \epsilon} + 1\\
		& = & \frac{1}{\epsilon^2} \left(\frac{9 \left\| A \right\|^2 D_f R^2}{2} + \frac{3 R^2 \epsilon}{2\mu} + \epsilon^2 \right),
\end{eqnarray*}
in order to obtain an approximately optimal solution to $(D)$, we need $k=O\left(\frac{1}{\epsilon}\ln \left(\frac{1}{\epsilon}\right)\right)$ iterations.

\subsection{Convergence of \texorpdfstring{$\left\| \nabla\theta_{\rho}(p_k) \right\|$}{norm nabla-theta-rho} to \texorpdfstring{0}{0}}\label{subsectionConvNablaTheta}

Guaranteeing $\epsilon$-optimality for the objective value of the dual is not sufficient for solving the primal optimization problem with a good convergence rate, as we need at least the same convergence rate for the decrease of $\|\nabla \theta_{\rho}(p_k)\| = \left\| Ax_{f,p_k} - x_{g,p_k} \right\|$ to $0$. Within this section we show that this desiderate is attained (see also \cite{NesterovDoubleSmooth, NesterovDoubleSmoothInfinite}).
Since
\begin{align*}
	\left\| p_k \right\| = \left\| p_k - p_{DS}^* +p_{DS}^* \right\| \leq \left\| p_k - p_{DS}^* \right\| + \left\| p_{DS}^* \right\|
		\overset{\eqref{opt-scheme:norm-of-arguments}}{\leq} 2 \left\| p_{DS}^* \right\| ,
\end{align*}
we conclude that
\begin{align}
	\label{inequality:norm-of-theta-rho-mu}
	\left\| \nabla \theta_{\rho}(p_k) \right\| &\leq \left\|\nabla \theta_{\rho,\kappa}(p_k) \right\| + \left\| \kappa p_k \right\| \notag \\
		&\leq \left\| \nabla\theta_{\rho,\kappa}(p_k) \right\| + 2\kappa \left\| p_{DS}^* \right\|	\ \forall k \geq 0.
\end{align}
We further have
\begin{align*}
	\left\| \nabla\theta_{\rho,\kappa}(p_k) \right\|^2 &\overset{\eqref{opt-scheme:norm-of-gradient}}{\leq}
					4 L(\rho,\kappa) (\theta_{\rho,\kappa}(0) - \theta_{\rho,\kappa}(p_{DS}^*)) \, \text{e}^{ -k \sqrt{\frac{\kappa}{L(\rho,\kappa)}} } \\
					&\overset{\eqref{inequality:theta-rho-mu-kappa in p_{DS}^*}}{\leq}
					4 L(\rho,\kappa) (\theta_{\rho}(0) - \theta_{\rho}(p_{DS}^*))\, \text{e}^{ -k \sqrt{\frac{\kappa}{L(\rho,\kappa)}} } \\
					&\overset{\eqref{inequality:theta-0-p^*-estimate}}{\leq}
					4 L(\rho,\kappa) \left(\theta(0) - \theta(p^*) + \frac{\epsilon}{3}\right)\, \text{e}^{ -k \sqrt{\frac{\kappa}{L(\rho,\kappa)}} },
\end{align*}
which  yields
\begin{align}
	\label{inequality:norm-of-theta-rho-mu-kappa}
	\left\| \nabla\theta_{\rho,\kappa}(p_k) \right\| \leq
					2 \sqrt{L(\rho,\kappa) \left(\theta(0) - \theta(p^*) + \frac{\epsilon}{3}\right)}\, \text{e}^{ -\frac{k}{2} \sqrt{\frac{\kappa}{L(\rho,\kappa)}} }\ \forall k \geq 0.
\end{align}
In order to give an upper bound for the second term in \eqref{inequality:norm-of-theta-rho-mu}, we notice that
\begin{eqnarray*}
	\theta(p^*) + \frac{\kappa}{2} \left\| p^* \right\|^2
	&\overset{\eqref{inequality:theta-rho-mu and theta relation}}{\geq}& \theta_{\rho}(p^*) + \frac{\kappa}{2} \left\| p^* \right\|^2
	\geq \theta_{\rho}(p_{DS}^*) + \frac{\kappa}{2} \left\| p_{DS}^* \right\|^2 \\
	&\overset{\eqref{inequality:theta-rho-mu and theta relation}}{\geq}&
	\theta(p_{DS}^*) -\rho D_f + \frac{\kappa}{2} \left\| p_{DS}^* \right\|^2 \\
	&\geq&
	\theta(p^*) -\rho D_f + \frac{\kappa}{2} \left\| p_{DS}^* \right\|^2,
\end{eqnarray*}
which is equivalent to $\frac{\kappa}{2} \left\| p_{DS}^* \right\|^2 \leq \frac{\kappa}{2} \left\| p^* \right\|^2 + \rho D_f$, i.\,e. $\left\| p_{DS}^* \right\|^2 \leq \left\| p^* \right\|^2 + \frac{2\rho}{\kappa}  D_f$. Hence,
\begin{align}
	\label{inequality:norm of optimal solution p_DS^*}
	\left\| p_{DS}^* \right\|   \leq \sqrt{\left\| p^* \right\|^2 + \frac{2\rho}{\kappa}D_f}
	\overset{\eqref{opt-problem-smoothing-parameters}}{=} \sqrt{\left\| p^* \right\|^2 + \frac{2\epsilon}{3\kappa}}
	\overset{\eqref{opt-problem-smoothing-parameters}}{=} \sqrt{\left\| p^* \right\|^2 + R^2}
	\overset{\eqref{opt-problem:dual-solution-upper-bound}}{\leq} \sqrt{2} R,
\end{align}
which, combined with \eqref{inequality:norm-of-theta-rho-mu} and \eqref{inequality:norm-of-theta-rho-mu-kappa}, provides
\begin{align}
	\label{inequality:norm-final-theta-rho-mu}
	\left\| \nabla\theta_{\rho}(p_k) \right\| &\leq
	2 \sqrt{L(\rho,\kappa) \left(\theta(0) - \theta(p^*) + \frac{\epsilon}{3}\right)} \,\text{e}^{ -\frac{k}{2} \sqrt{\frac{\kappa}{L(\rho,\kappa)}} }
	+ 2\sqrt{2} \kappa R \notag \\
	&= 2 \sqrt{L(\rho,\kappa) \left(\theta(0) - \theta(p^*) + \frac{\epsilon}{3}\right)} \,\text{e}^{ -\frac{k}{2} \sqrt{\frac{\kappa}{L(\rho,\kappa)}} }	+ \frac{4\sqrt{2}\epsilon}{3R} \ \forall k \geq 0.
\end{align}
For  $\epsilon > 0$ fixed, the first term in \eqref{inequality:norm-final-theta-rho-mu} decreases by the iteration counter $k$, and, by taking into account \eqref{opt-problem-smoothing-parameters}, we can ensure
\begin{equation}
	\label{opt-scheme:convergence-for-dual-problem}
	\theta(p_k) -\theta(p^*) \leq \epsilon \ \mbox{and} \ \left\| \nabla \theta_{\rho}(p_k) \right\| \leq \frac{2\epsilon}{R}
\end{equation}
in $k=O\left(\frac{1}{\epsilon} \ln\left(\frac{1}{\epsilon}\right)\right)$ iterations.

\subsection{Improved convergence rates}
In this  subsection we investigate how additionally assumptions on the functions $f$ and/or $g$ influence the implementation of the double smoothing approach and its rate of convergence.

\subsubsection{The case \texorpdfstring{$f$}{f} is strongly convex}\label{subsectionSmoothSmooth}
Assuming additionally to the standing assumptions that the function $f:\h \rightarrow \overline{\mathbb{R}}$ is $\rho$-strongly convex,  for $\rho > 0$, the first smoothing, as done in Subsection \ref{subsectionFirstSmooth}, can be omitted and the fast gradient method \eqref{doublesmooth:opt-sheme} can be applied to the function $\theta_{\kappa}:\R^m \rightarrow \R$, $\theta_{\kappa}:= f^*(A^*p) + g^*(-p) + \frac{\kappa}{2}\left\| p \right\|^2$, with $\kappa>0$, which is $\kappa$-strongly convex and differentiable with Lipschitz continuous gradient. In the light of Remark \ref{remarkRegularizationUnnecessary} the Lipschitz constant of $\nabla \theta_{\kappa}$ is $L(\kappa) := \frac{\left\| A \right\|^2}{\rho} + \frac{1}{\mu} + \kappa$.

Similar to the calculations made in Section \ref{subsectionConvTheta} we obtain for all $k \geq 0$
\begin{align*}
		\theta(p_k) - \theta(p^*) \leq \frac{\kappa}{2} R^2 + \frac{25}{8} \left( \theta(0) - \theta(p^*) \right) \text{e}^{ -\frac{k}{2} \sqrt{\frac{\kappa}{L(\kappa)}}}.
\end{align*}
Hence, when $\varepsilon > 0$, in order to guarantee $\epsilon$-accuracy for the dual objective function we can force both terms in the above estimate to be less than or equal to $\frac{\epsilon}{2}$. Thus, by taking
\begin{align*}
	\kappa := \kappa(\epsilon) = \frac{\epsilon}{R^2},
\end{align*}
this time we will need to this end, in contrast to \eqref{doublesmooth-estimation-theta},
$$ k \geq 2 \sqrt{\frac{L(\kappa)}{\kappa}} \ln{\left( \frac{25\left( \theta(0) - \theta(p^*) \right)}{4 \epsilon}  \right)}, $$
i.\,e. $k=O\left(\frac{1}{\sqrt{\epsilon}} \ln\left(\frac{1}{\epsilon}\right)\right)$ iterations.

In analogy to the considerations made in Section \ref{subsectionConvNablaTheta} we obtain for all $k \geq 0$
\begin{align*}
	\left\| \nabla \theta(p_k) \right\| &\leq 2 \sqrt{L(\kappa)(\theta(0)-\theta(p^*))}\,\text{e}^{-\frac{k}{2}\sqrt{\frac{\kappa}{L(\kappa)}}} +2\kappa R \\
	&= 2 \sqrt{L(\kappa)(\theta(0)-\theta(p^*))}\,\text{e}^{-\frac{k}{2}\sqrt{\frac{\kappa}{L(\kappa)}}} + \frac{2\epsilon}{R}.
\end{align*}
Therefore, in order to guarantee $\left\| \nabla \theta(p_k) \right\| \leq \frac{3\epsilon}{R}$, we need $k= O\left(\frac{1}{\sqrt{\epsilon}} \ln\left(\frac{1}{\epsilon}\right)\right)$ iterations, which coincide with the convergence rate for the dual objective values.

\subsubsection{The case \texorpdfstring{$g$}{g} is everywhere differentiable with Lipschitz continuous gradient}\label{subsectionSmoothNonsmooth}
Assuming additionally to the standing assumptions that the function $g:\R^m \rightarrow \R$ has full domain and it is differentiable with $\frac{1}{\kappa}$-Lipschitz continuous gradient, for $\kappa > 0$, the second smoothing, as done in Subsection \ref{subsectionSecondSmoothing} can be omitted. The fast gradient method \eqref{doublesmooth:opt-sheme} can be applied to the function $\theta_{\rho}:\R^m \rightarrow \R$, $\theta_{\rho}:= f_{\rho}^*(A^*p) + g^*(-p)$, which is $\kappa$-strongly convex due to \cite[Theorem 18.15]{BauschkeCombettes11} and differentiable with Lipschitz continuous gradient. The Lipschitz constant of $\nabla \theta_{\rho}$ is $L(\rho) := \frac{\left\| A \right\|^2}{\rho} + \frac{1}{\mu}$.

The algorithm \eqref{doublesmooth:opt-sheme} applied to $\theta_{\rho}$ states
\begin{align*}
		\theta_{\rho}(p_k) - \theta_{\rho}(p_{DS}^*)
		&\leq \left( \theta_{\rho}(0) - \theta_{\rho}(p_{DS}^*) + \frac{\kappa}{2} \left\| 0 - p_{DS}^*\right\|^2 \right) \text{e}^{ -k \sqrt{\frac{\kappa}{L(\rho)}}} \\
		&\leq 2\left( \theta_{\rho}(0) - \theta_{\rho}(p_{DS}^*) \right) \text{e}^{ -k \sqrt{\frac{\kappa}{L(\rho)}}} \ \forall k \geq 0.
\end{align*}
Since $\theta_{\rho}(0) \overset{\eqref{inequality:theta-rho-mu and theta relation}}{\leq} \theta(0)$ and $\theta_{\rho}(p_{DS}^*) \overset{\eqref{inequality:theta-rho-mu and theta relation}}{\geq} \theta(p_{DS}^*) - \rho D_f \geq \theta(p^*) - \rho D_f$, we obtain
\begin{align}
	\label{inequality-smooth-nonsmooth-section}
	\theta_{\rho}(0) - \theta_{\rho}(p_{DS}^*) \leq \theta(0) - \theta(p^*) + \rho D_f.
\end{align}
On the other hand, since $\theta_{\rho}(p_k) - \theta_{\rho}(p_{DS}^*) \overset{\eqref{inequality:theta-rho-mu and theta relation}}{\geq} \theta(p_k) - \rho D_f -\theta(p^*)$, it follows
\begin{align*}
		\theta(p_k) - \theta(p^*)
		&\leq \rho D_f + \theta_{\rho}(p_k) - \theta_{\rho}(p_{DS}^*) \\
		&\leq \rho D_f + 2\left( \theta(0) - \theta(p^*) +\rho D_f \right) \text{e}^{ -k \sqrt{\frac{\kappa}{L(\rho)}}} \ \forall k \geq 0.
\end{align*}
Hence, when $\varepsilon > 0$, in order to guarantee $\epsilon$-optimality for the dual objective, we force both terms in the above estimate less than or equal to $\frac{\epsilon}{2}$. By taking
\begin{align}
	\label{smoothing-parameter-smooth-nonsmooth-section}
	\rho := \rho(\epsilon) = \frac{\epsilon}{2 D_f},
\end{align}
in contrast to \eqref{doublesmooth-estimation-theta}, we need
$$ k \geq \sqrt{\frac{L(\rho)}{\kappa}} \ln{\left( \frac{4\left( \theta(0) - \theta(p^*) + \frac{\epsilon}{2}\right)}{\epsilon}  \right)}, $$
i.\,e. $k=O\left(\frac{1}{\sqrt{\epsilon}} \ln\left(\frac{1}{\epsilon}\right)\right)$ iterations.

We obtain as well
\begin{align*}
	\left\| \nabla \theta_{\rho}(p_k) \right\| &\overset{\eqref{opt-scheme:norm-of-gradient}}{\leq} 2 \sqrt{L(\rho)(\theta_{\rho}(0)-\theta_{\rho}(p_{DS}^*))}\,\text{e}^{-\frac{k}{2}\sqrt{\frac{\kappa}{L(\rho)}}} \\
	&\overset{\eqref{inequality-smooth-nonsmooth-section}}{\leq} 2 \sqrt{L(\rho)(\theta(0)-\theta(p^*) + \rho D_f)}\,\text{e}^{-\frac{k}{2}\sqrt{\frac{\kappa}{L(\rho)}}} \\
	&\overset{\eqref{smoothing-parameter-smooth-nonsmooth-section}}{=} 2 \sqrt{L(\rho)(\theta(0)-\theta(p^*) + \frac{\epsilon}{2})}\,\text{e}^{-\frac{k}{2}\sqrt{\frac{\kappa}{L(\rho)}}} \ \forall k \geq 0.
\end{align*}
Therefore, in order to guarantee $\left\| \nabla \theta(p_k) \right\| \leq \frac{3\epsilon}{R}$, we need $k= O\left(\frac{1}{\sqrt{\epsilon}} \ln\left(\frac{1}{\epsilon}\right)\right)$ iterations, which is the same convergence rate as for the dual objective values.

\subsubsection{The case \texorpdfstring{$f$}{f} is strongly convex and \texorpdfstring{$g$}{g} is everywhere differentiable with Lipschitz continuous gradient}\label{subsectionSmoothSmoothStrongly}
Assuming additionally to the standing assumptions that the function $f:\h \rightarrow \overline{\mathbb{R}}$ is $\rho$-strongly convex, for $\rho > 0$, and the function $g:\R^m \rightarrow \R$ has full domain and it is  differentiable with $\frac{1}{\kappa}$-Lipschitz continuous gradient, for $\kappa > 0$, both the first and second smoothing can be omitted. The fast gradient method \eqref{doublesmooth:opt-sheme} can be applied to the function $\theta :\R^m \rightarrow \R$, $\theta:= f^*(A^*p) + g^*(-p)$, which is $\kappa$-strongly convex and differentiable with Lipschitz continuous gradient. The Lipschitz constant of $\nabla \theta$ is $L:= \frac{\left\| A \right\|^2}{\rho} + \frac{1}{\mu}$.

The fast gradient scheme \eqref{doublesmooth:opt-sheme} applied to $\theta$ yields for all $k \geq 0$
\begin{align*}
		\theta(p_k) - \theta(p^*)
		 \overset{\eqref{opt-scheme:objective-function1}}{\leq}  (\theta(0) - \theta(p^*) + \frac{\kappa}{2} \left\|0-p^* \right\|^2)
																																		\,\text{e}^{ -k \sqrt{\frac{\kappa}{L}} }
		 \overset{\eqref{opt-scheme:objective-function2}}{\leq}  2 (\theta(0) - \theta(p^*)) \,\text{e}^{ -k \sqrt{\frac{\kappa}{L}} }
\end{align*}
and, from here, when $\varepsilon > 0$,
\begin{align*}
		2 (\theta(0) - \theta(p^*)) \,\text{e}^{ -k \sqrt{\frac{\kappa}{L}} } \leq \epsilon
		\Leftrightarrow k \geq \sqrt{\frac{L}{\kappa}} \ln \left( \frac{2(\theta(0) - \theta(p^*))}{\epsilon} \right)
\end{align*}
On the other hand, formula \eqref{opt-scheme:norm-of-gradient} states $\left\| \nabla \theta(p_k) \right\|	\leq 	2 \sqrt{L (\theta(0) - \theta(p^*))}
\,\text{e}^{ -\frac{k}{2} \sqrt{\frac{\kappa}{L}} }$ for all $k\geq 0$, thus
\begin{align*}
		2 \sqrt{L(\theta(0) - \theta(p^*))} \,\text{e}^{ - \frac{k}{2} \sqrt{\frac{\kappa}{L}} }  \leq \epsilon
		\Leftrightarrow k \geq 2\sqrt{\frac{L}{\kappa}} \ln \left( \frac{2\sqrt{L(\theta(0) - \theta(p^*))}}{\epsilon} \right).
\end{align*}	
In conclusion, in order to guarantee $\varepsilon$-accuracy for the dual objective values and for the decrease of $\|\nabla \theta(\cdot)\|$ to $0$, we need $O\left(\ln\left(\frac{1}{\epsilon}\right)\right)$ iterations.

\subsection{Constructing an approximate primal solution}\label{subsectionApproximatePrimal}
In the remaining of this section we work in the setting of our initial standing assumptions and show, first of all, how to recover approximately optimal solutions for the primal $(P)$  from the sequence of approximately dual solutions $(p_k)_{k \geq 0}$. This will be followed by a convergence analysis for the approximate primal optimal solutions. One can easily notice that the investigations made here remain valuable when working in the special settings of the previous section, too.

Since our main focus is to solve the primal optimization problem $(P)$, we prove as follows that the sequences $(x_{f,p_k})_{k\geq0} \subseteq \dom f$ and $(x_{g,p_k})_{k\geq0} \subseteq \dom g$  constructed in Subsection \ref{subsectionFirstSmooth} contain all the information one needs to recover approximately optimal solutions to $(P)$.

Since $\theta_{\rho}(p_k) -\theta(p^*) \overset{\eqref{inequality:theta-rho-mu and theta relation}}{\leq} \theta(p_k) -\theta(p^*) \leq \epsilon$ and
\begin{align*}
		\theta_{\rho}(p_k) - \theta(p^*) \overset{\eqref{inequality:theta-rho-mu and theta relation}}{\geq} \theta(p_k) - \rho D_f -\theta(p^*)
		\overset{\eqref{opt-problem-smoothing-parameters}}{=} \underbrace{\theta(p_k) -\theta(p^*)}_{\geq 0} -\frac{\epsilon}{3} \geq -\frac{\epsilon}{3},
\end{align*}
it holds $\left| \theta_{\rho}(p_k) -\theta(p^*) \right| \leq \epsilon$ for all $k \geq 0$. Further, for $p_k \in \mathbb{R}^m$ we have
\begin{eqnarray*}
		\theta_{\rho}(p_k) & = &	f_{\rho}^*(A^*p_k) + g^*(-p_k)\\
                           & = & \left\langle  p_k,Ax_{f,p_k} \right\rangle  -f(x_{f,p_k}) - \frac{\rho}{2} \left\| x_{f,p_k} \right\|^2
				  -\left\langle  p_k,x_{g,p_k} \right\rangle  -g(x_{g,p_k})
\end{eqnarray*}
and from here (notice that $-v(D)=\theta(p^*)$)
\begin{align*}
		f(x_{f,p_k}) + g(x_{g,p_k}) -v(D) = \left\langle p_k,\nabla \theta_{\rho}(p_k) \right\rangle + (\theta(p^*) -\theta_{\rho}(p_k) ) - \frac{\rho}{2} \left\| x_{f,p_k} \right\|^2 \ \forall k \geq 0.
\end{align*}
It follows
\begin{eqnarray*}
		\left| f(x_{f,p_k}) + g(x_{g,p_k}) -v(D) \right|
		&\leq& \left\| p_k \right\| \left\| \nabla \theta_{\rho}(p_k) \right\| + \left| \theta(p^*) -\theta_{\rho}(p_k) \right| + \frac{\rho}{2} \left\| x_{f,p_k} \right\|^2 \\
		&\leq& \left\| p_k \right\| \left\| \nabla \theta_{\rho}(p_k) \right\| + \epsilon + \rho D_f  \\
		&\overset{\eqref{opt-problem-smoothing-parameters}}{\leq}& \left\| p_k \right\| \left\| \nabla \theta_{\rho}(p_k) \right\| + 2 \epsilon
		\overset{\eqref{opt-scheme:convergence-for-dual-problem}}{\leq} \frac{2\epsilon}{R} \left\| p_k \right\| +2 \epsilon \ \forall k \geq 0.
\end{eqnarray*}
Further, $\left\| p_k \right\|$ can be estimated above using
\begin{align*}
	\left\| p_k \right\| = \left\| p_k + p_{DS}^* - p_{DS}^* \right\| \leq \left\| p_k -p_{DS}^* \right\| + \left\| p_{DS}^* \right\|
	\overset{\eqref{opt-scheme:norm-of-arguments}}{\leq} 2 \left\| p_{DS}^* \right\| \overset{\eqref{inequality:norm of optimal solution p_DS^*}}{\leq} 2\sqrt{2}R,
\end{align*}
therefore, we obtain
\begin{align}
		\label{inequality:primal-solution-estimate}
		\left| f(x_{f,p_k}) + g(x_{g,p_k}) -v(D) \right| \leq 4 \sqrt{2} \epsilon + 2 \epsilon = 2(2\sqrt{2}+1) \epsilon \ \forall k \geq 0.
\end{align}
By taking into account weak duality, i.\,e. $v(D)\leq v(P)$, we conclude that $x_{f,p_k} \in \dom f$ and $x_{g,p_k} \in \dom g$ can be seen as approximately optimal solutions to $(P)$ when $k$ is high enough to satisfy \eqref{opt-scheme:convergence-for-dual-problem}.

\subsection{Existence of an optimal solution}\label{subsectionExistence}

This section is devoted to the convergence analysis of our primal sequences when $\varepsilon$ converges to zero. To this end let $(\epsilon_n)_{n\geq0} \subseteq \mathbb{R}_+$ be a decreasing sequence of positive scalars with $\lim_{n\rightarrow \infty}{\epsilon_n}=0$. For each $n\geq0$, the double smoothing algorithm  \eqref{doublesmooth:opt-sheme} with smoothing parameters $\rho_{\epsilon_n}$ and $\kappa_{\epsilon_n}$ given by \eqref{opt-problem-smoothing-parameters} requires at least $k=k(\epsilon_n)$ iterations to fulfill \eqref{opt-scheme:convergence-for-dual-problem}. For $n \geq 0$ we denote
\begin{equation*}
	\bx_n := x_{f,p_{k(\epsilon_n)}}  \in \dom f \ \mbox{and} \ \by_n := x_{g,p_{k(\epsilon_n)}}  \in \dom g.
\end{equation*}
Due to the boundedness of $\dom f$, its closure $\cl(\dom f)$ is weakly compact (see \cite[Theorem 3.3]{BauschkeCombettes11})  and there exists a subsequence $(\bx_{n_l})_{l \geq 0}$ and $\bx \in \h$ such that
$\bx_{n_l}$ weakly converges to $\bx \in \cl(\dom f)$ when $l \rightarrow +\infty$. Since $A:\h \rightarrow \R^m$ is linear and continuous, the sequence $A\bx_{n_l}$ will converge to $A\bx$ when $l \rightarrow +\infty$. In view of relation \eqref{opt-scheme:convergence-for-dual-problem} we get
\begin{align}
		\label{doublesmooth-feasible-estimate}
		0 \leq  \left\| A\bx_{n_l} - \by_{n_l} \right\| &\leq \frac{2\epsilon_{n_l}}{R} \ \forall l \geq 0.
\end{align}
This means that the sequence $(\by_{n_l})_{l \geq 0} \subseteq \dom g$ is obviously bounded, hence there exists a subsequence of it (still denoted by $(\by_{n_l})_{l \geq 0}$) and an element $\bar y \in \cl(\dom g)$ such that
$\by_{n_l} \rightarrow \by$ when $l \rightarrow +\infty$. Taking $l \rightarrow +\infty$ in \eqref{doublesmooth-feasible-estimate} it follows $A\bx=\by$.
Furthermore, due to \eqref{inequality:primal-solution-estimate}, we have
\begin{align*}
	f(\bx_{n_l}) + g(\by_{n_l}) \leq v(D) + 2(3\sqrt{2}+1)\epsilon_{n_l} \quad \forall l \geq 0
\end{align*}
and, by using the lower semicontinuity of $f$ and $g$ and \cite[Theorem 9.1]{BauschkeCombettes11}, we obtain
\begin{eqnarray*}
	f(\bx) + g(A\bx) & \leq & \liminf_{l \rightarrow \infty}{\left\{f(\bx_{n_l}) + g(\by_{n_l})\right\}}\\
	& \leq & \lim_{l \rightarrow \infty}{\left\{v(D) + 2(3\sqrt{2}+1)\epsilon_{n_l}\right\}} = v(D) \leq v(P).
\end{eqnarray*}
Since $v(P) \in \R$, we have $\bx \in \dom f$ and $A\bx \in \dom g$, which yields that $\bx$ is an optimal solution to $(P)$.

\section{Two examples in image processing}\label{sectionExample}
In this section we are solving a linear inverse problem which arises in the field of signal and image processing via the double smoothing algorithm developed in this paper. For a given matrix $A \in \mathbb{R}^{n \times n}$ describing a \textit{blur operator} and a given vector $b \in \R^n$ representing the \textit{blurred and noisy image} the task is to estimate the \textit{unknown original image} $x^*\in\R^n$ fulfilling
$$Ax=b.$$
To this end we make use of two regularization functionals with different properties.

\subsection{An \texorpdfstring{$l_1$}{l1} regularization problem}\label{subsectionExample-l1}

We start by solving the $l_1$ regularized convex optimization problem
\begin{align*}
	\hspace{-1.8cm}(P) \quad \quad \inf_{x \in S}{\left\{ \left\| Ax-b \right\|^2 + \lambda \left\| x \right\|_1\right\}},
\end{align*}
where $S\subseteq \R^n$ is an $n$-dimensional cube representing the range of the pixels and $\lambda > 0$  the regularization parameter. The problem to be solved can be equivalently written as
\begin{align*}
	\hspace{-1.8cm}(P) \quad \quad \inf_{x \in \mathbb{R}^n}{\left\{ f(x) + g(Ax)\right\}},
\end{align*}
for $f:\R^n \rightarrow \overline{\R}$, $f(x)=\lambda\left\| x \right\|_1 + \delta_{S}(x)$ and $g:\R^n \rightarrow \R$, $g(y)=\left\|y-b \right\|^2$. Thus $f$ is proper, convex and lower semicontinuous with bounded domain and $g$ is a $2$-strongly convex function with full domain, differentiable everywhere and with Lipschitz continuous gradient having as Lipschitz constant $2$. This means that we are in the setting of Subsection \ref{subsectionSmoothNonsmooth}.

By making use of gradient methods, both the iterative shrinkage-tresholding algorithm (ISTA) (see \cite{Daubechiesetal04}) and its accelerated variant FISTA (see \cite{BeckTeboulle09, BeckTeboulle10}) solve the optimization problem $(P)$ in $O\left(\frac{1}{\epsilon}\right)$ and $O\left(\frac{1}{\sqrt{\epsilon}}\right)$ iterations, respectively, whereas the convergence rate of our method is $O\left(\frac{1}{\sqrt{\epsilon}} \ln\left(\frac{1}{\epsilon}\right)\right)$.

Since each pixel furnishes a greyscale value which is between $0$ and $255$, a natural choice for the convex set $S$ would be the $n$-dimensional cube $\left[0,255\right]^n \subseteq \mathbb{R}^n$. In order to reduce the Lipschitz constant  which appears in the developed approach, we scale the pictures to which refer within this subsection such that each of their  pixels ranges in the interval $\left[0,\frac{1}{10}\right]$. We concretely look at the $256 \times 256$ \textit{cameraman test image}, which is part of the image processing toolbox in Matlab. The dimension of the vectorized and scaled cameraman test image is $n=256^2=65536$. By making use of the Matlab functions {\ttfamily imfilter} and {\ttfamily fspecial}, this image is blurred as follows:
\begin{lstlisting}[numbers=left,numberstyle=\tiny,frame=tlrb,showstringspaces=false]
H=fspecial('gaussian',9,4);   % gaussian blur of size 9 times 9
                              % and standard deviation 4
B=imfilter(X,H,'conv','symmetric');  % B=observed blurred image
                                     % X=original image		
\end{lstlisting}
In row $1$ the function {\ttfamily fspecial} returns a rotationally symmetric Gaussian lowpass filter of size $9 \times 9$ with standard deviation $4$. The entries of $H$ are nonnegative and their sum adds up to $1$. In row $3$ the function {\ttfamily imfilter} convolves the filter $H$  with the image $X\in \mathbb{R}^{256 \times 256}$ and outputs the blurred image $B\in \mathbb{R}^{256 \times 256}$ . The boundary option "symmetric" corresponds to reflexive boundary conditions.

Thanks to the rotationally symmetric filter $H$, the linear operator $A\in\mathbb{R}^{n \times n}$ given by the Matlab function {\ttfamily imfilter} is symmetric, too. By making use of the real spectral decomposition of $A$, it shows that $\left\| A \right\|^2=1$. After adding a zero-mean white Gaussian noise with standard deviation $10^{-4}$, we obtain the blurred and noisy image $b \in \mathbb{R}^n$ which is shown in Figure \ref{fig:cameraman}.
\begin{figure}[ht]
	\centering
	\includegraphics*[viewport= 58 318 560 553, width=0.8\textwidth]{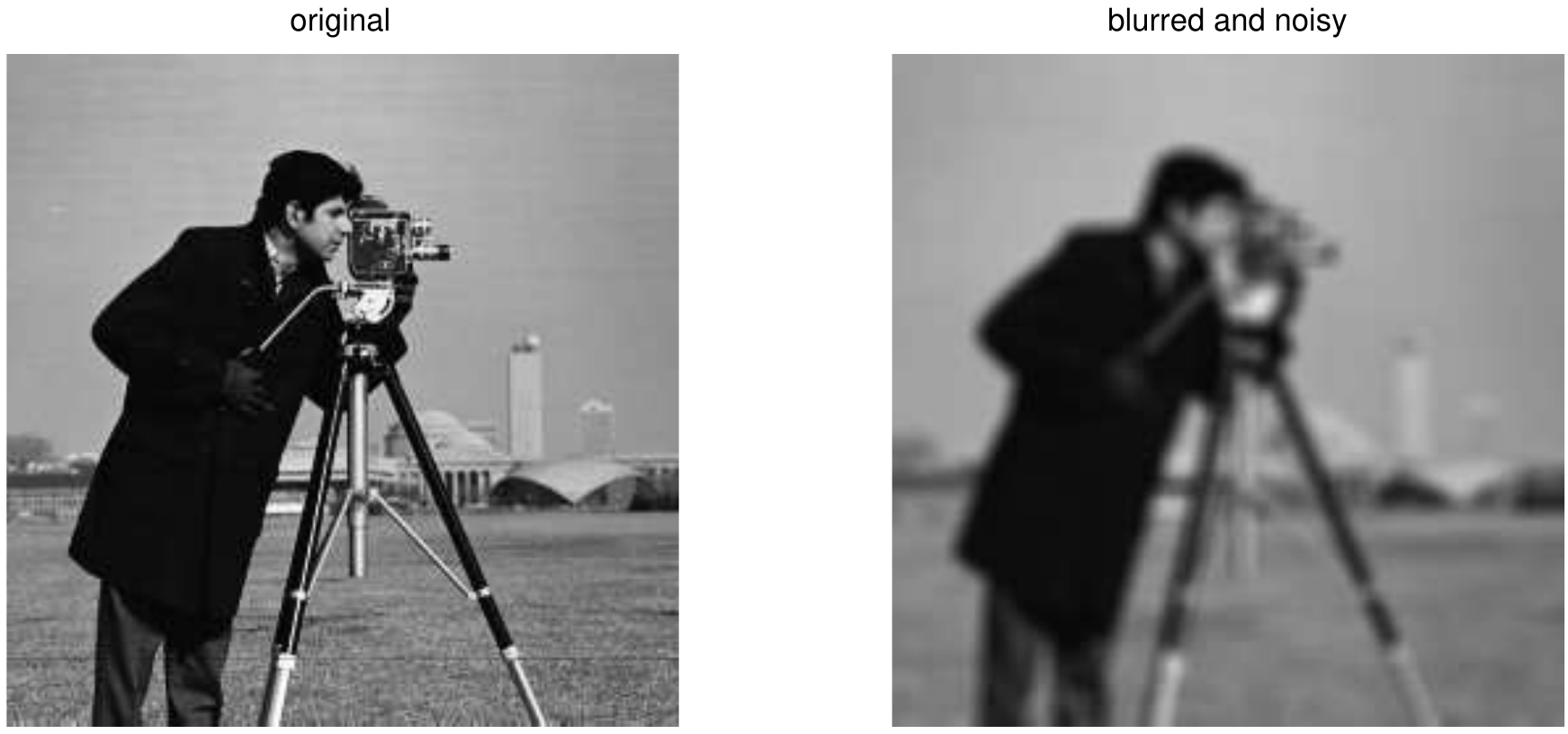}
	\caption{The $256 \times 256$ cameraman test image}
	\label{fig:cameraman}
\end{figure}

The dual optimization problem in minimization form is
\begin{align*}
	\hspace{-1.8cm}(D) \quad \quad -\inf_{p \in \mathbb{R}^n}{\left\{f^*(A^*p)+g^*(-p)\right\}}
\end{align*}
and, due to the fact that $g$ has full domain, strong duality for $(P)$ and $(D)$ holds, i.\,e. $v(P)=v(D)$ and $(D)$ has an optimal solution (see, for instance, \cite{Bot10,BotGradWanka09}). By taking into consideration \eqref{smoothing-parameter-smooth-nonsmooth-section}, the smoothing parameter is taken as
\begin{align}
	\label{example-smoothing-parameters}
	\rho := \frac{\epsilon}{2D_f}
\end{align}
for $D_f = \sup{\left\{ \frac{\left\|x\right\|^2}{2} : x \in \left[0,\frac{1}{10}\right]^n \right\}} = 327.68$, while the accuracy is chosen to be $\epsilon=0.3$ and the regularization parameter is set to $\lambda=2$e-$6$.

We show next that the sequences of approximate primal solutions $(x_{f,p_k})_{k \geq 0}$ and $(x_{g,p_k})_{k \geq 0}$ can be easily calculated. Indeed, for $k \geq 0$ we have
\begin{eqnarray*}
	 x_{f,p_k} & = & \argmin_{x\in \left[0,\frac{1}{10}\right]^n}{\left\{ \lambda \left\|x\right\|_1 + \frac{\rho}{2}\left\|\frac{A^*p_k}{\rho} - x\right\|^2 \right\}}\\
& = & \argmin_{x\in \left[0,\frac{1}{10}\right]^n}{\left\{ \sum_{i=1}^{n}{\left[ \lambda \left|x_i\right| + \frac{\rho}{2} \left(\frac{(A^*p_k)_i}{\rho} - x_i \right)^2\right]}  \right\}}
\end{eqnarray*}
and, in order to determine it, we need to solve the one-dimensional convex optimization problem
\begin{align*}
	 \inf_{x_i\in \left[0,\frac{1}{10}\right]}{\left\{ \lambda x_i + \frac{\rho}{2} \left(\frac{(A^*p_k)_i}{\rho} - x_i \right)^2 \right\}},
\end{align*}
for $i=1,\ldots,n$, which has as unique optimal solution $\mathcal{P}_{\left[0,\frac{1}{10}\right]}\left( \frac{1}{\rho} \left( (A^*p_k)_i - \lambda \right) \right)$.
Thus,
$$x_{f,p_k} = \mathcal{P}_{\left[0,\frac{1}{10}\right]^n}\left( \frac{1}{\rho} \left( A^*p_k - \lambda \mathbbm{1}^n \right) \right).$$
On the other hand, for all $k \geq 0$ we have
\begin{align*}
x_{g,p_k} = \argmin_{x\in\R^n}{\left\{ \left\langle p_k,x \right\rangle + g(x) \right\}}
= \argmin_{x\in\R^n}{\left\{ \left\langle p_k,x \right\rangle + \left\|x - b\right\|^2 \right\}}
= b-\frac{1}{2}p_k.
\end{align*}

\begin{figure}[ht]	
	\centering
	\includegraphics*[viewport= 143 249 470 606, width=0.32\textwidth]{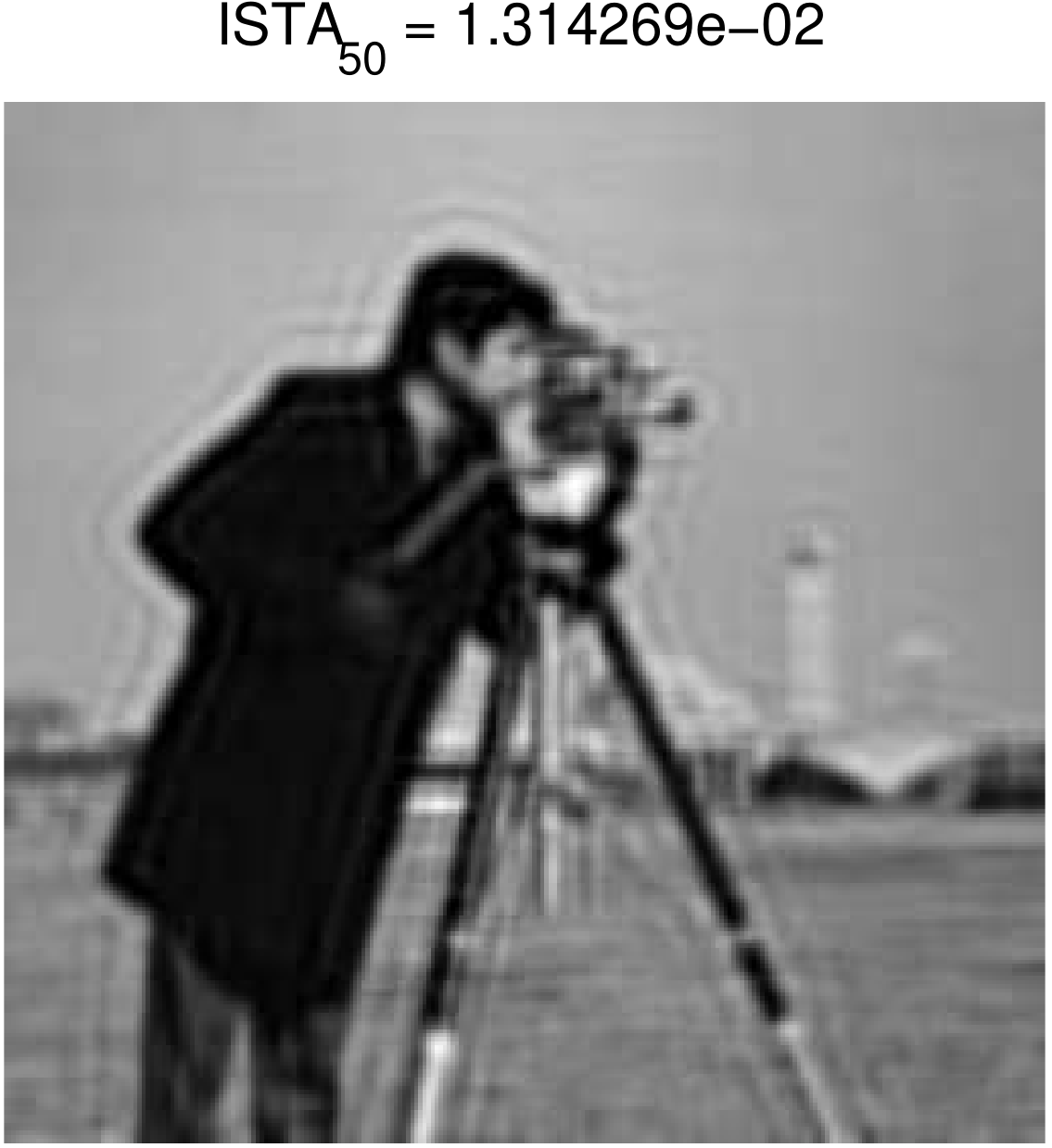}
	\includegraphics*[viewport= 143 249 470 606, width=0.32\textwidth]{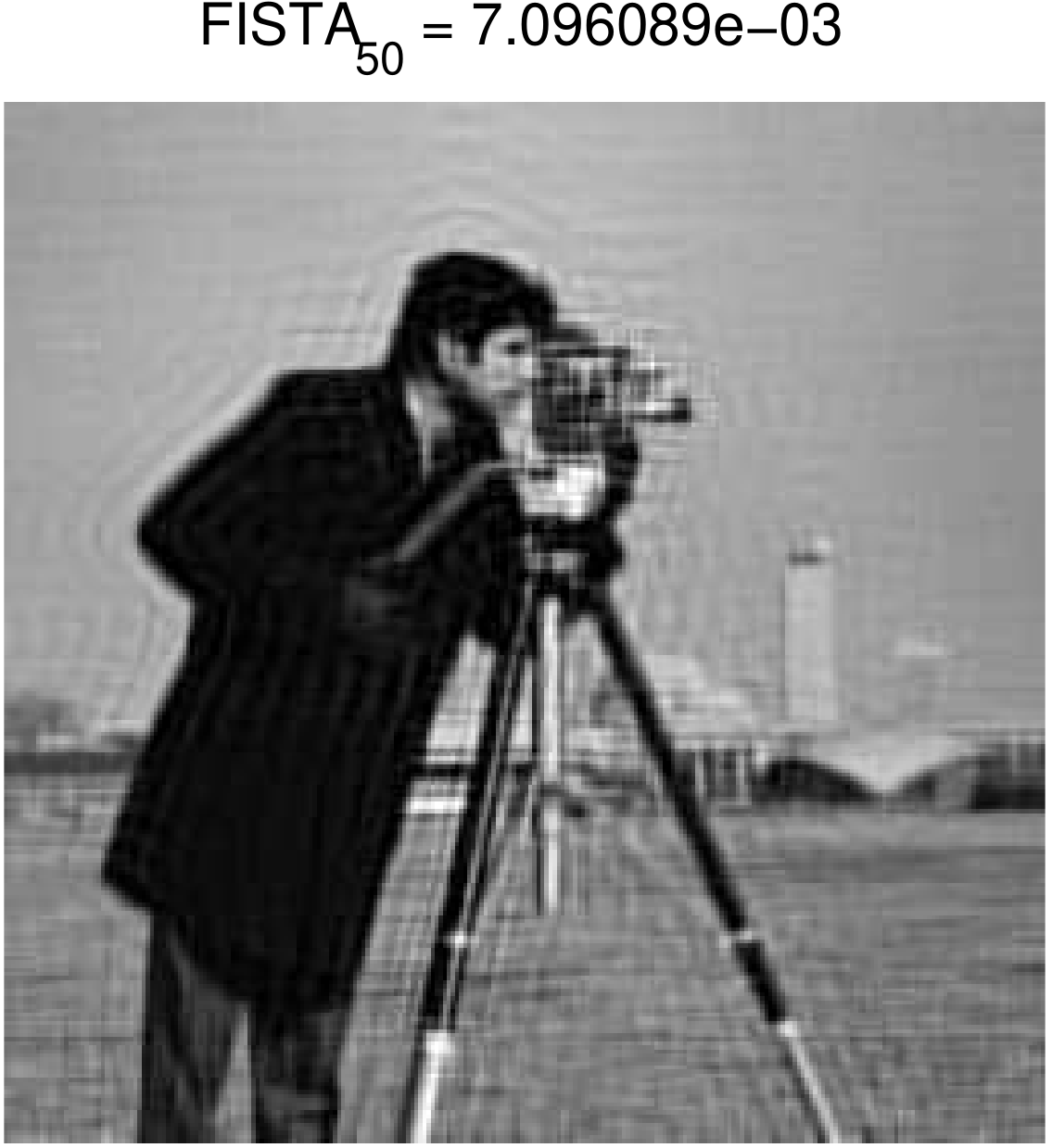}
	\includegraphics*[viewport= 143 249 470 606, width=0.32\textwidth]{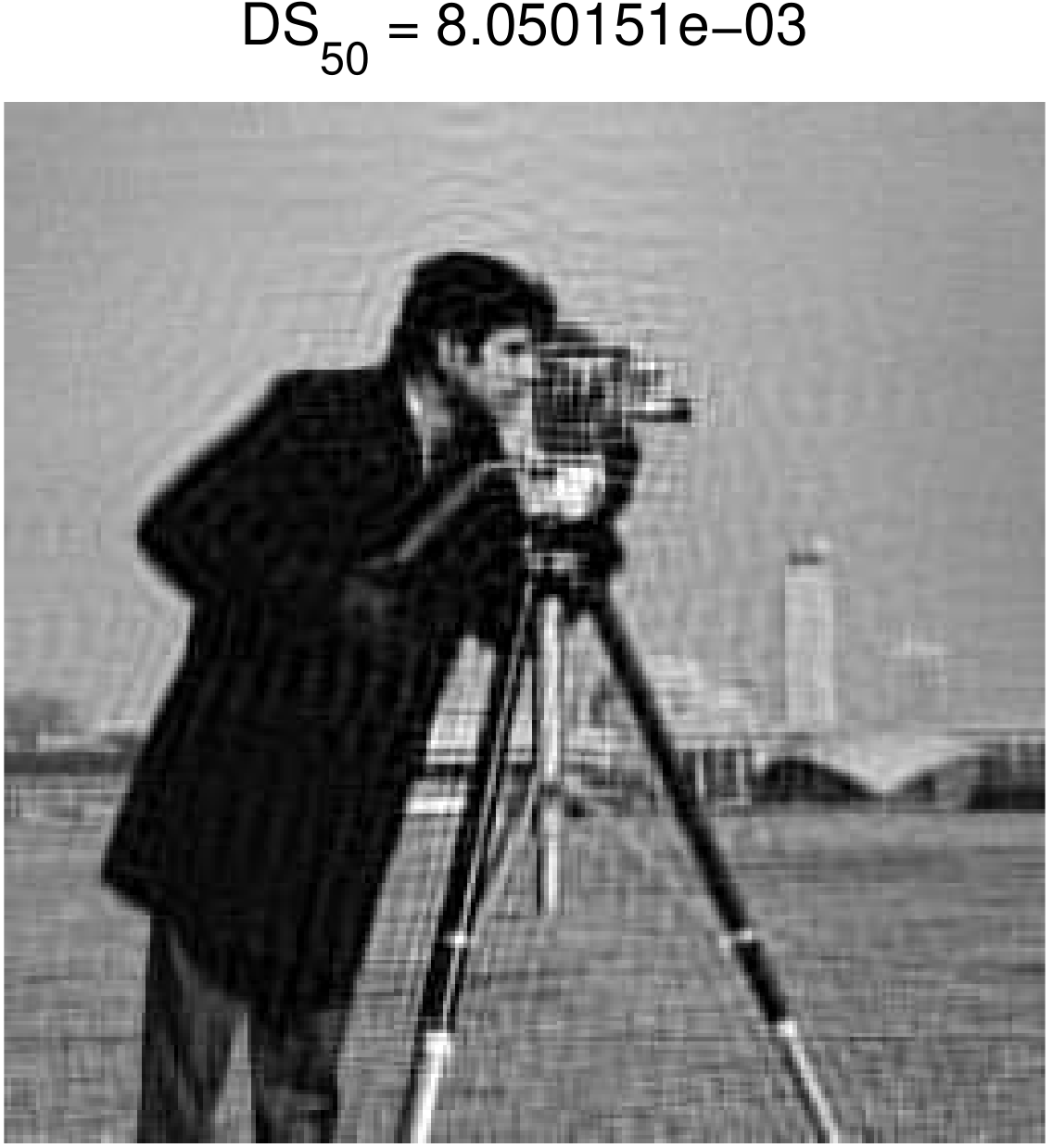}	
	\includegraphics*[viewport= 143 249 470 616, width=0.32\textwidth]{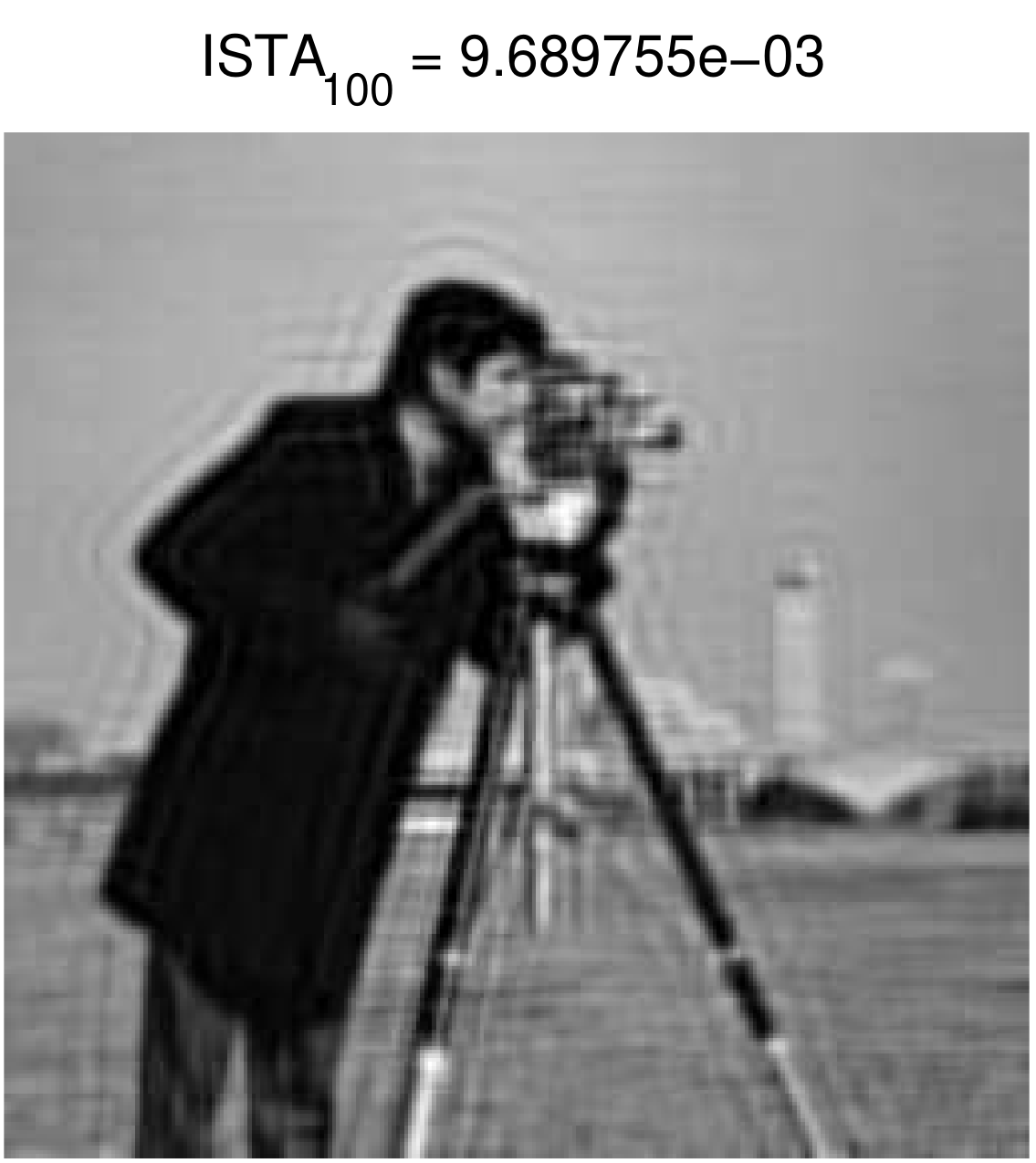}
	\includegraphics*[viewport= 143 249 470 616, width=0.32\textwidth]{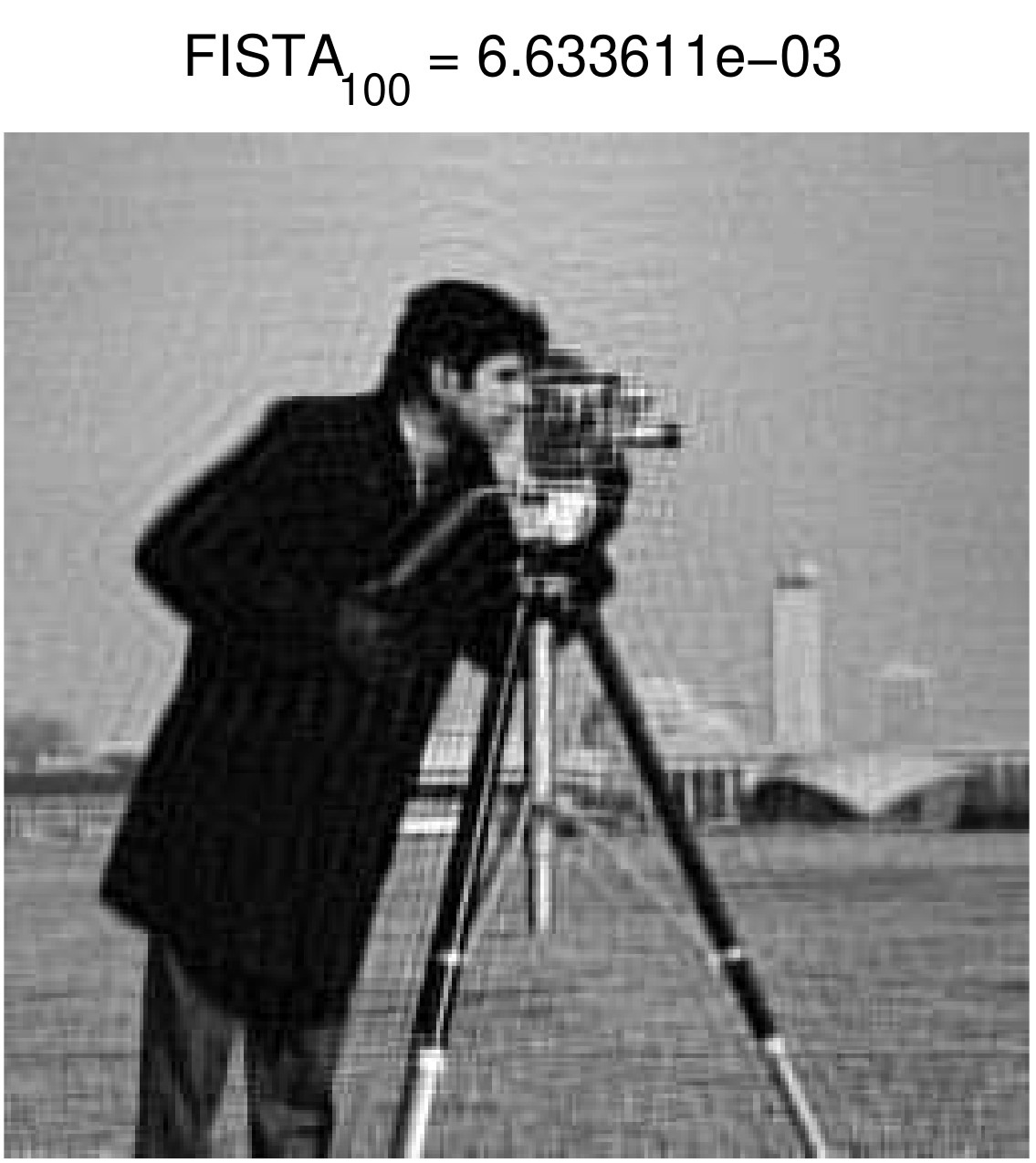}
	\includegraphics*[viewport= 143 249 470 616, width=0.32\textwidth]{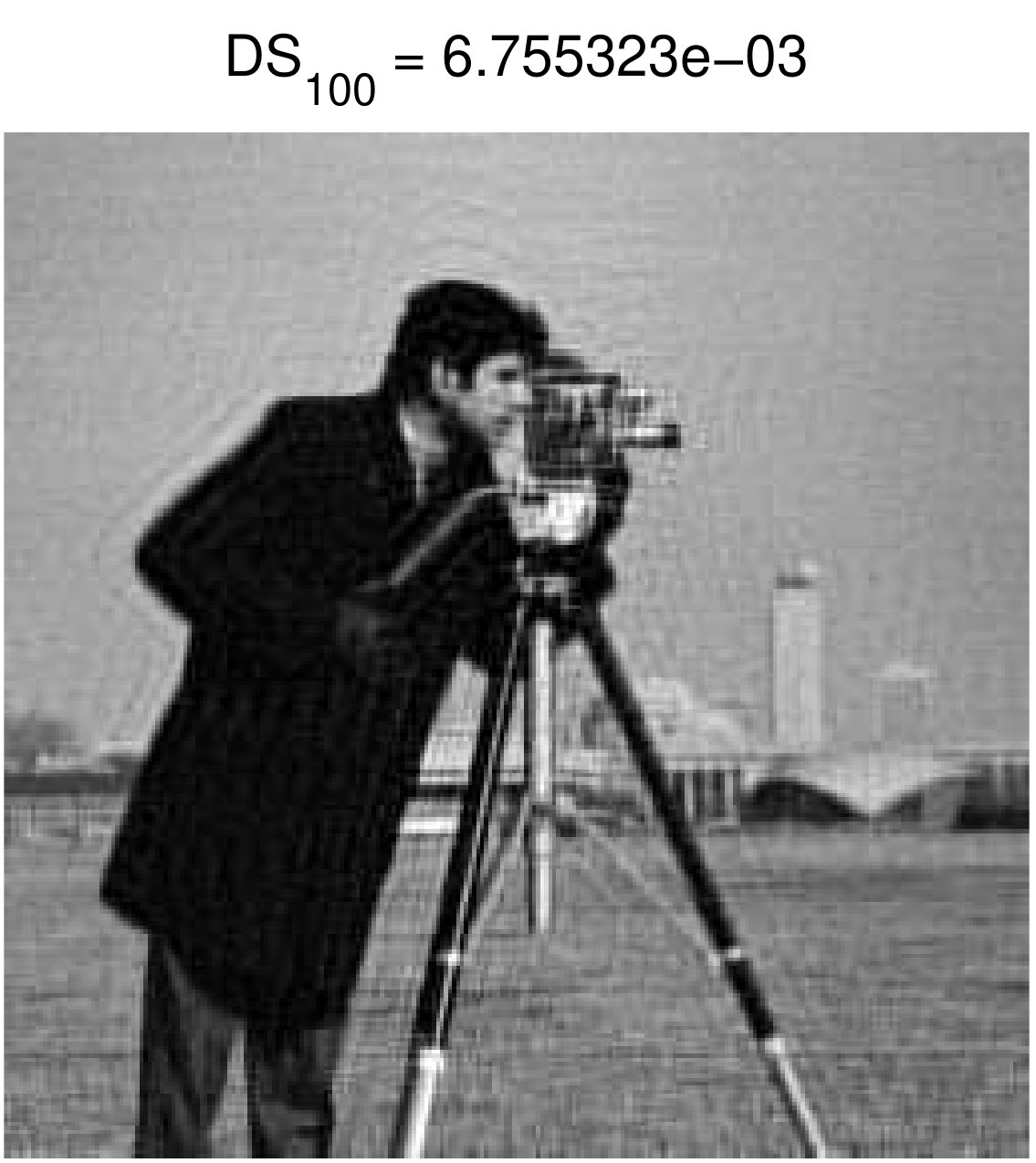}
	\caption{Iterations of ISTA, FISTA and double smoothing (DS) for solving $(P)$}
	\label{fig:cameramen_ISTA_FISTA_DS}	
\end{figure}

Figure \ref{fig:cameramen_ISTA_FISTA_DS} shows the iterations 50 and 100 of ISTA, FISTA and the double smoothing (DS) approach. The objective function values at iteration $k$ are denoted by ISTA$_k$, FISTA$_k$ and, respectively, DS$_k$ (e.\,g. DS$_k:=f(x_{f,p_k})+g(Ax_{f,p_k})$). All in all, the visual quality of the restored cameraman image after $100$ iterations, when using FISTA or DS, is quite comparable, whereas the recovered image by ISTA is still blurry. However, a valuable tool for measuring the quality of these images is the so-called \textit{improvement in signal-to-noise ratio (ISNR)}, which is defined as
$$ \text{ISNR}(k) = 10 \log_{10}\left( \frac{\left\|x-b\right\|^2}{\left\|x-x_k\right\|^2} \right) $$
where $x$, $b$ and $x_k$ denote the original, observed and estimated image at iteration $k$, respectively. Figure \ref{fig:l1_ISNR} shows the evolution of the ISNR values when using DS, FISTA and ISTA to solve $(P)$.
\begin{figure}[ht]	
	\centering
	\includegraphics*[viewport= 48 258 555 573, width=0.7\textwidth]{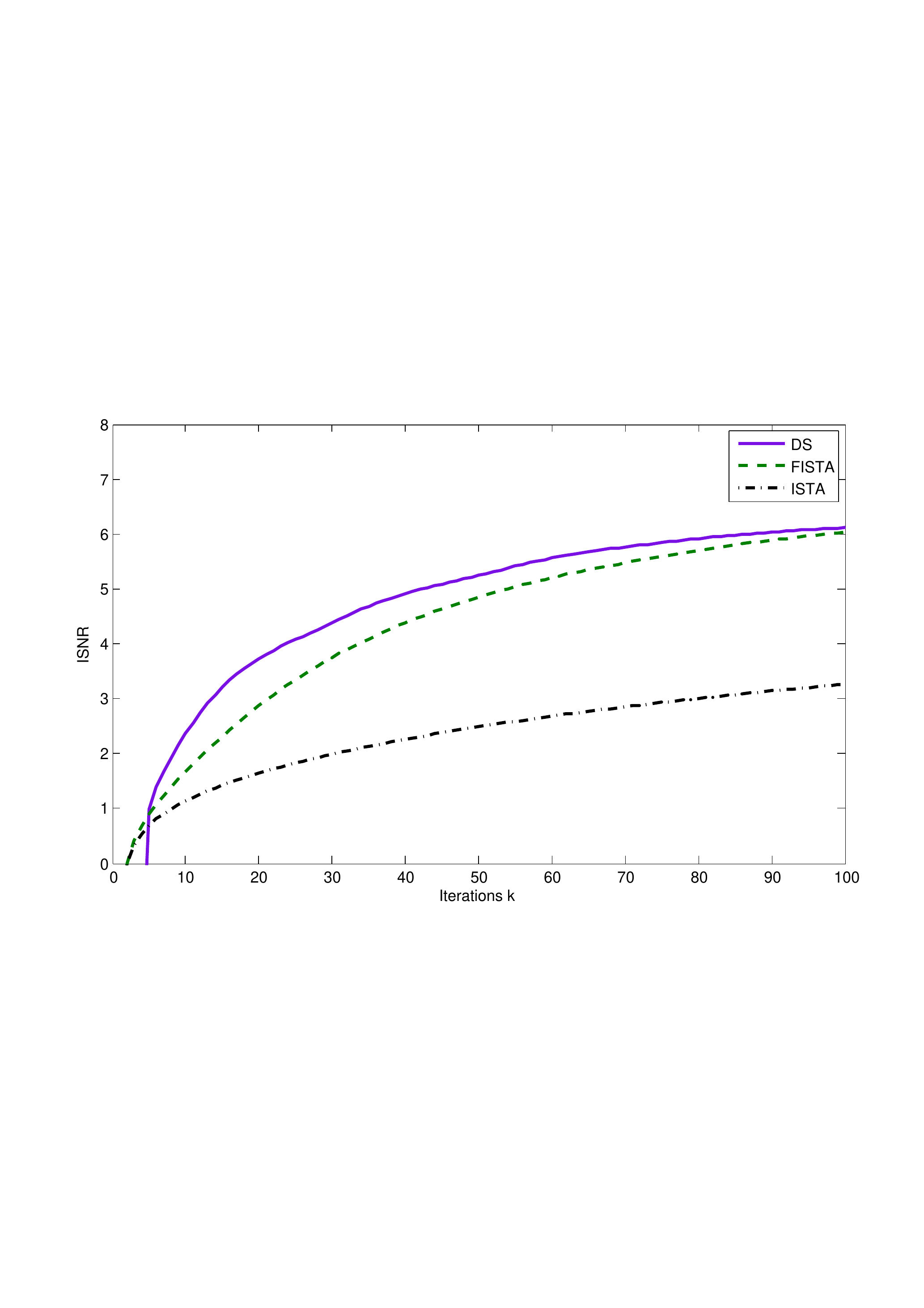}
	\caption{Improvement in signal-to-noise ratio (ISNR)}
	\label{fig:l1_ISNR}	
\end{figure}

\subsection{An \texorpdfstring{$l_2-l_1$}{l2-l1} regularization problem}\label{subsectionExample-l2l1}
The second convex optimization problem we solve is
\begin{align*}
	\hspace{-1.8cm}(P) \quad \quad \inf_{x \in S}{\left\{ \left\| Ax-b \right\|^2 + \lambda (\left\| x \right\|^2 + \left\| x \right\|_1) \right\}},
\end{align*}
where $S\subseteq \R^n$ is the $n$-dimensional cube $\left[0,1 \right]^n$ representing the pixel range, $\lambda > 0$ the regularization parameter and $\left\| \cdot \right\|^2 + \left\| \cdot \right\|_1$ the regularization functional, already used in \cite{BotHein11}. The problem to be solved can be equivalently written as
\begin{align*}
	\hspace{-1.8cm}(P) \quad \quad \inf_{x \in \mathbb{R}^n}{\left\{ f(x) + g(Ax)\right\}},
\end{align*}
for $f:\R^n \rightarrow \overline{\R}$, $f(x)=\lambda (\left\| x \right\|^2 + \left\| x \right\|_1) + \delta_{S}(x)$ and $g:\R^n \rightarrow \R$, $g(y)=\left\|y-b \right\|^2$. Thus $f$ is proper, $2\lambda$-strongly convex and lower semicontinuous with bounded domain and $g$ is a $2$-strongly convex function with full domain, differentiable everywhere and with Lipschitz continuous gradient having as Lipschitz constant $2$. This time we are in the setting of the Subsection \ref{subsectionSmoothSmoothStrongly}, the Lipschitz constant of the gradient of $\theta: \R^n \rightarrow \R$, $\theta(p)=f^*(A^*p)+g^*(-p)$, being $L= \frac{1}{2\lambda} + \frac{1}{2}$. By applying the double smoothing approach one obtains a rate of convergence of $O\left(\ln\left( \frac{1}{\epsilon}\right)\right)$ for solving $(P)$.

\begin{figure}[ht]
	\centering
	\includegraphics*[viewport= 91 322 541 500, width=0.8\textwidth]{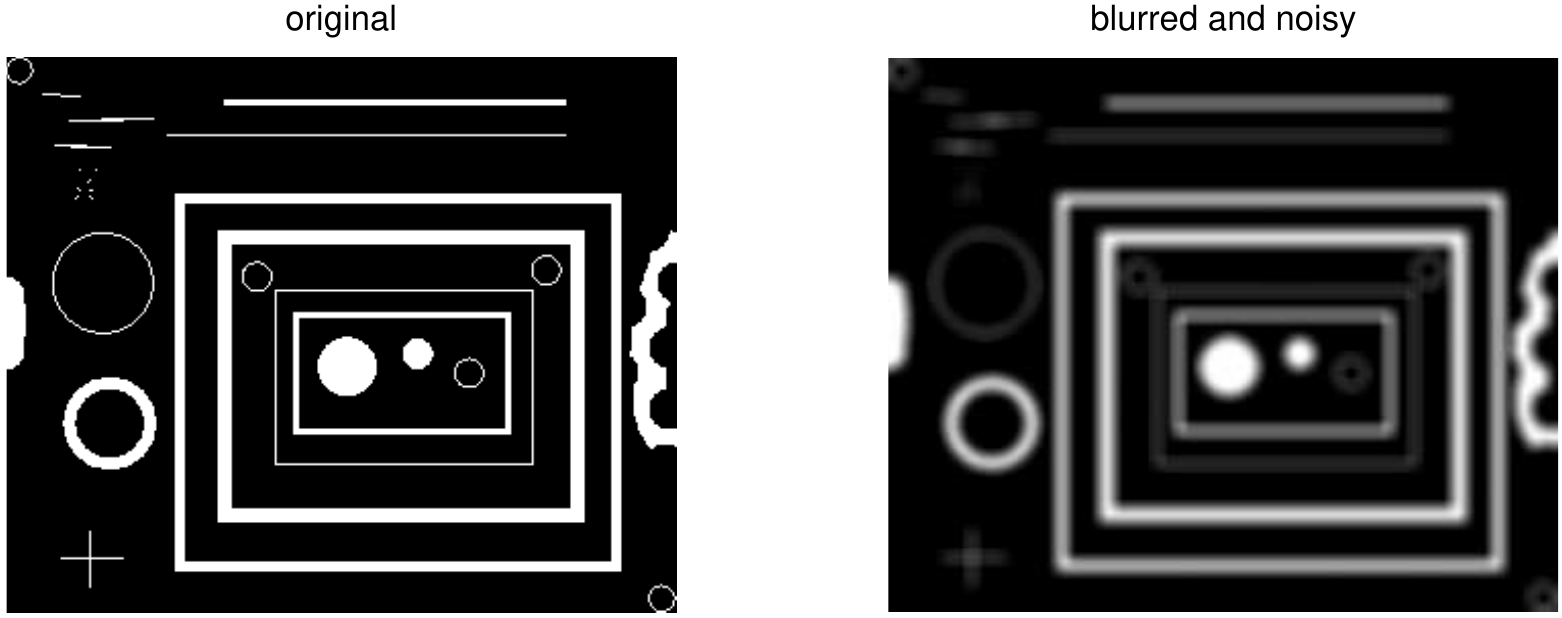}
	\caption{The $272 \times 329$ blobs test image}
	\label{fig:blobs}
\end{figure}

In this example we take a look at the \textit{blobs test image} shown in Figure \ref{fig:blobs} which is also part of the image processing toolbox in Matlab. The picture undergoes the same blur as described in the previous section. Since our pixel range has changed, we now use additive zero-mean white Gaussian noise with standard deviation $10^{-3}$ and the regularization parameter is changed to $\lambda=2$e-$5$.

We calculate next the sequences of approximate primal solutions $(x_{f,p_k})_{k \geq 0}$ and $(x_{g,p_k})_{k \geq 0}$. Indeed, for $k \geq 0$ we have
\begin{eqnarray*}
x_{f,p_k} & = & \argmin_{x\in \left[0,1\right]^n}{\left\{\lambda \left\| x \right\|^2 + \lambda \left\| x \right\|_1 - \left\langle A^*p_k,x \right\rangle \right\}}\\
& = & \argmin_{\substack{i=1,\dots,n \\ x_i \in \left[0,1 \right]}}{{\left\{  \sum_{i=1}^n{\left[ -(A^*p_k)_i x_i + \lambda x_i^2 + \lambda x_i \right]} \right\}}}
= \mathcal{P}_{\left[0,1 \right]^n}\left( \frac{1}{2\lambda}(A^*p_k - \lambda \mathbbm{1}^n) \right).
\end{eqnarray*}
and
\begin{align*}
x_{g,p_k} = \argmin_{x\in\R^n}{\left\{ \left\langle p_k,x \right\rangle + g(x) \right\}}
= \argmin_{x\in\R^n}{\left\{ \left\langle p_k,x \right\rangle + \left\|x - b\right\|^2 \right\}}
= b-\frac{1}{2}p_k.
\end{align*}

\begin{figure}[ht]	
	\centering
	\includegraphics*[viewport= 112 251 500 603, width=0.32\textwidth]{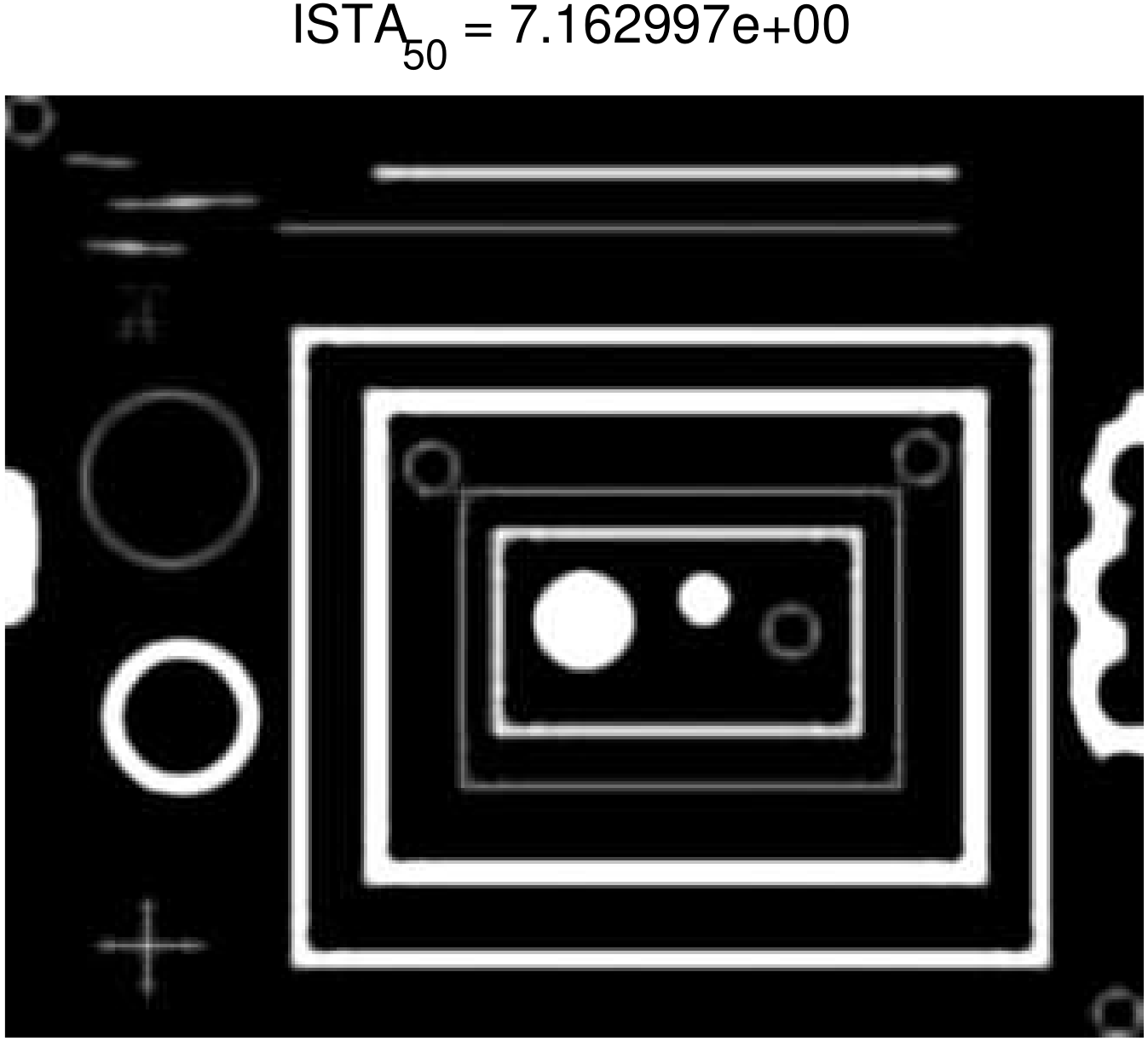}
	\includegraphics*[viewport= 112 251 500 603, width=0.32\textwidth]{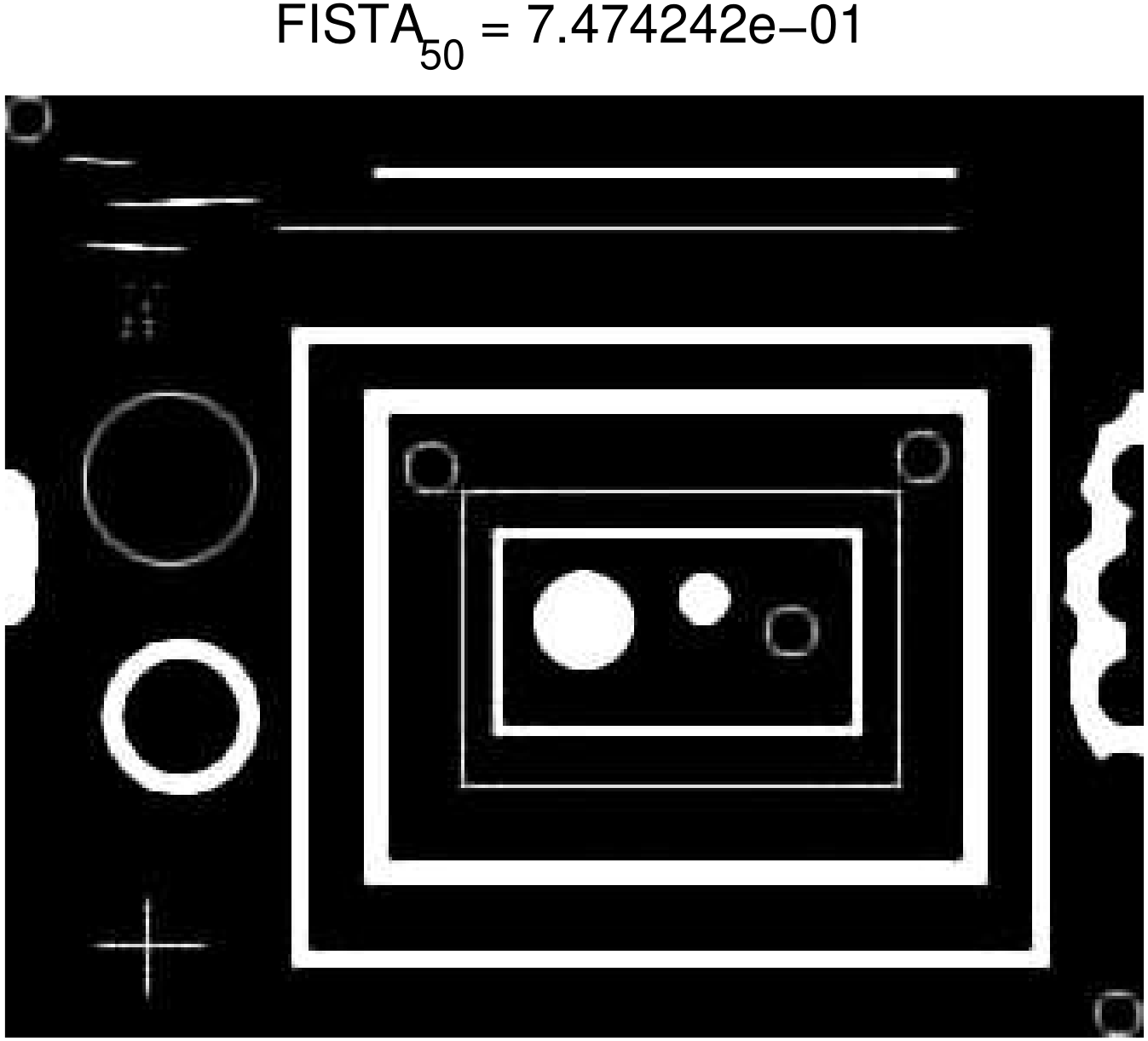}
	\includegraphics*[viewport= 112 251 500 603, width=0.32\textwidth]{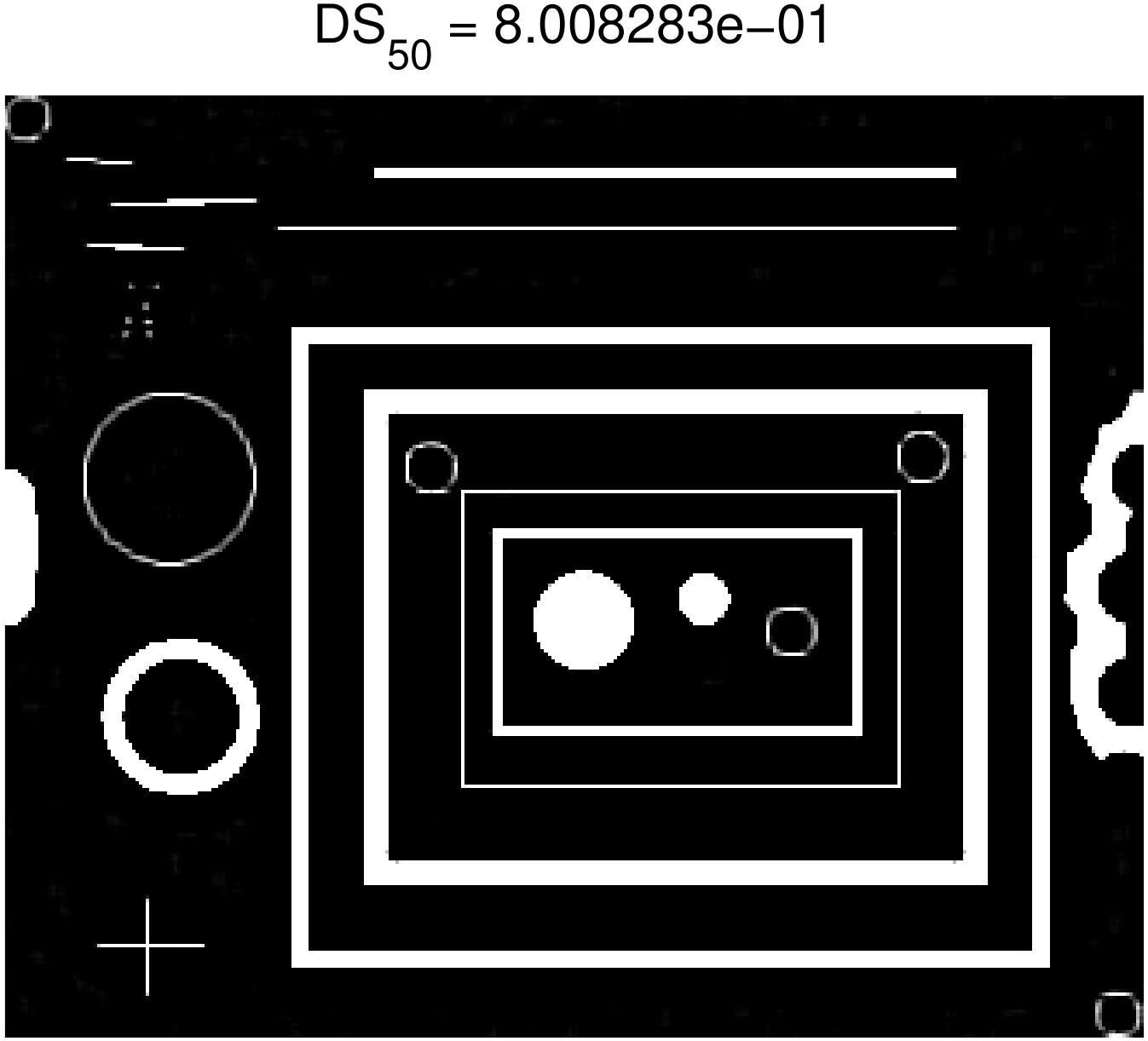}
	\includegraphics*[viewport= 112 251 500 613, width=0.32\textwidth]{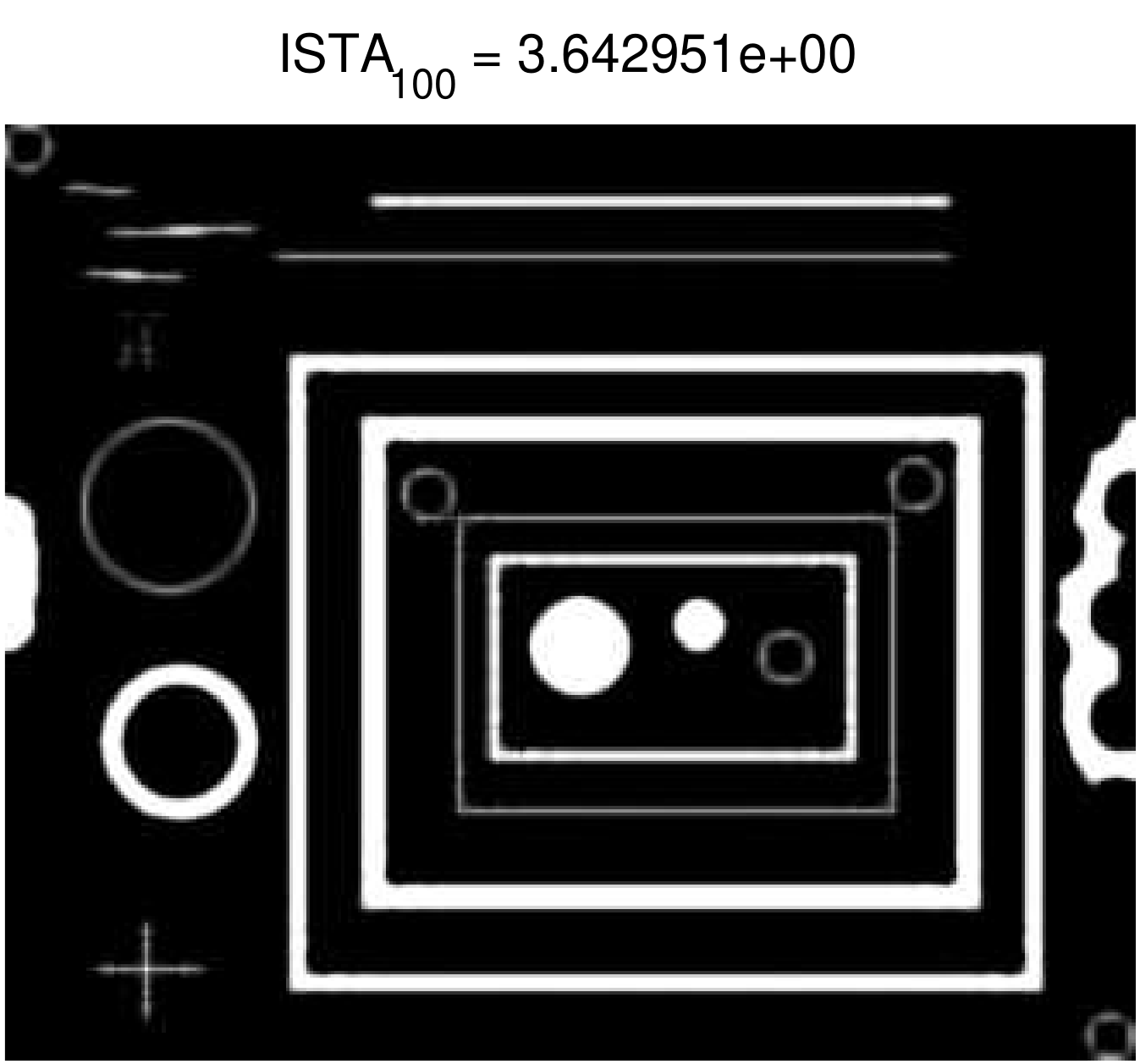}
	\includegraphics*[viewport= 112 251 500 613, width=0.32\textwidth]{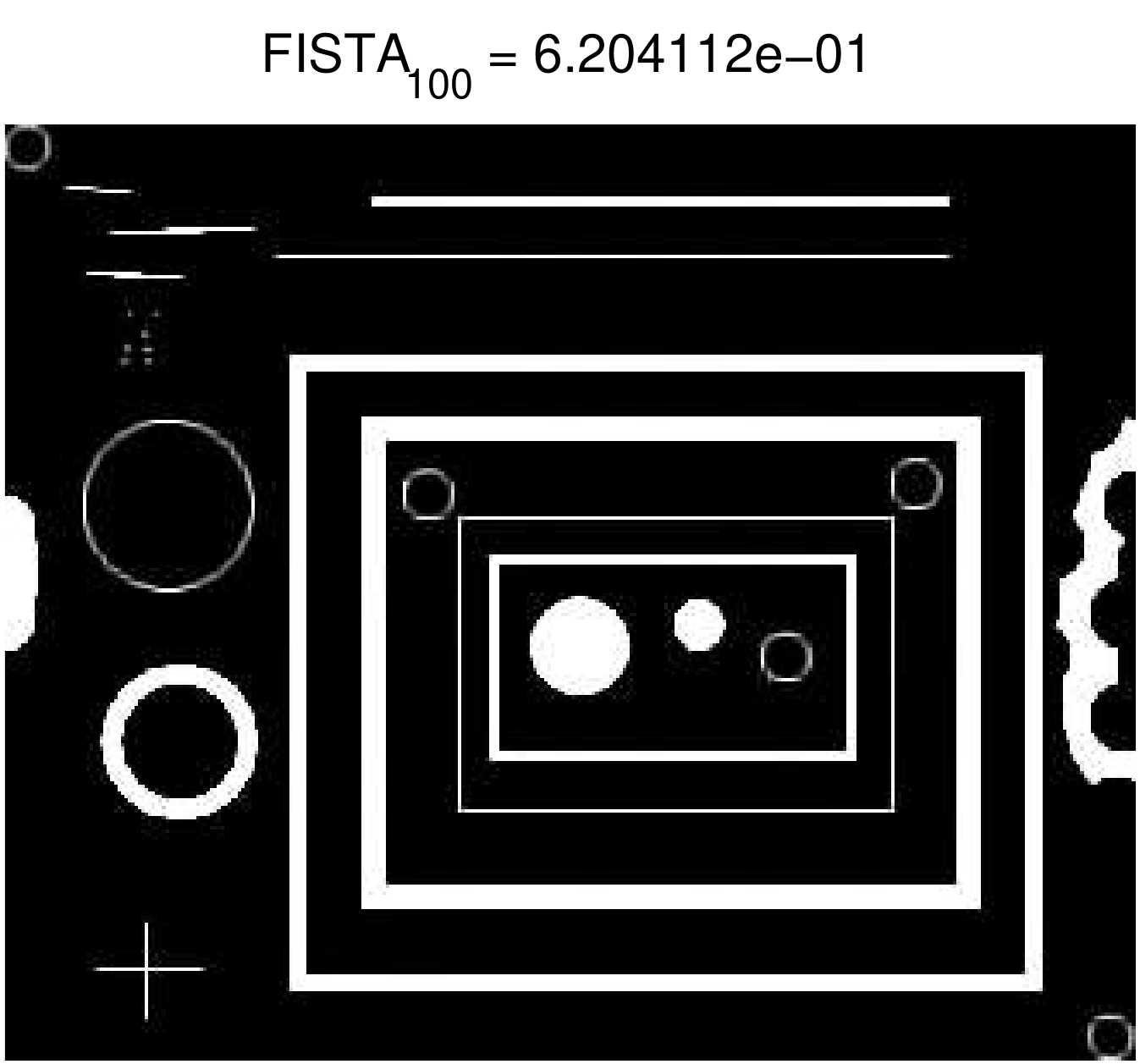}
	\includegraphics*[viewport= 112 251 500 613, width=0.32\textwidth]{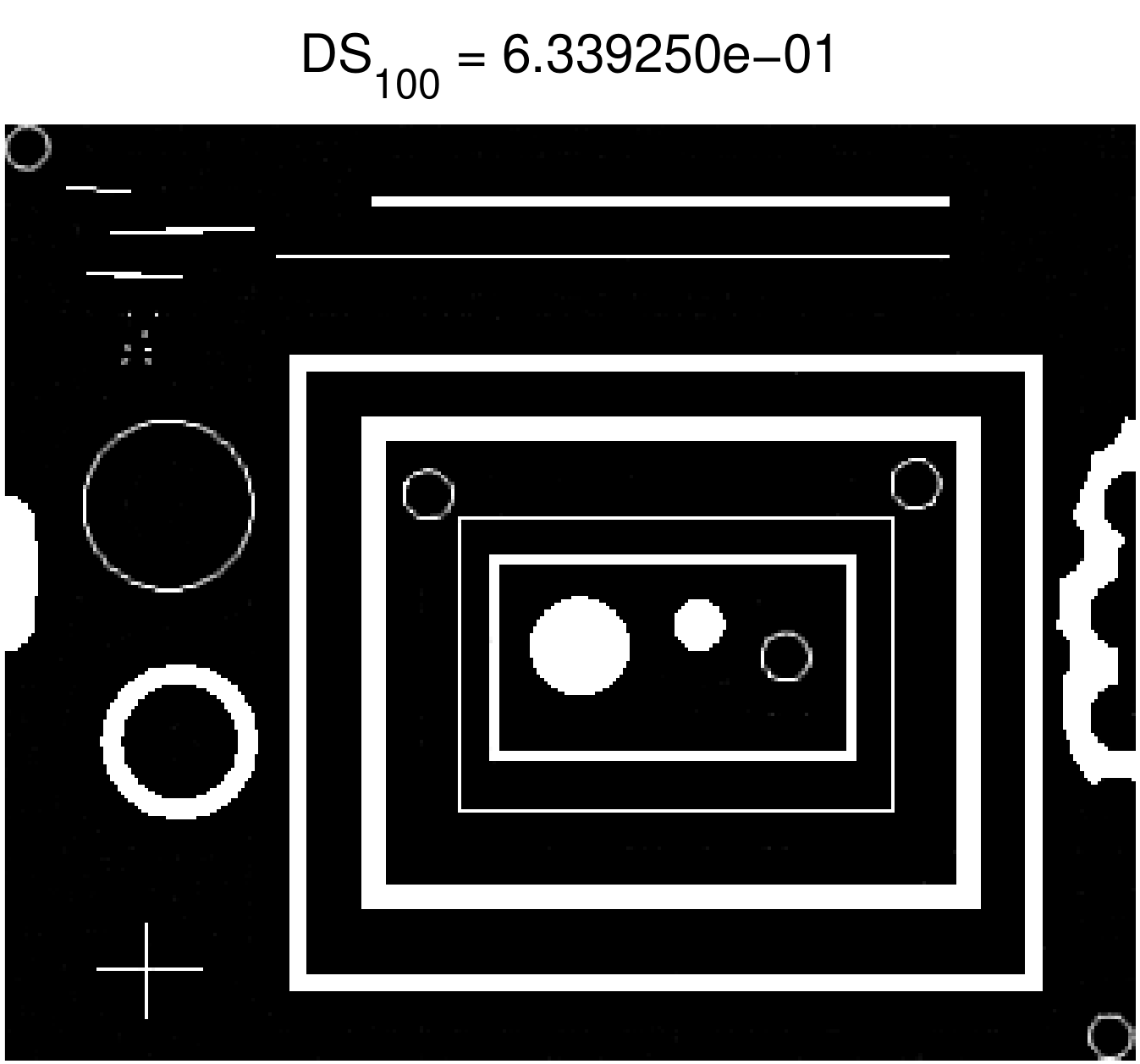}
	\caption{Iterations of ISTA, FISTA and double smoothing (DS) for solving $(P)$}
	\label{fig:blobs_ISTA_FISTA_DS}	
\end{figure}

Figure \ref{fig:blobs_ISTA_FISTA_DS} shows the iterations 50 and 100 of ISTA, FISTA and the double smoothing (DS) technique together with the corresponding function values denoted by ISTA$_k$, FISTA$_k$ or DS$_k$. As before, the function values of FISTA are slightly lower than those of DS, while ISTA is far behind these methods, not only from theoretical point of view, but also as it can be detected visually. Figure \ref{fig:l2_l1_ISNR} displays the improvement in signal-to-noise ration for ISTA, FISTA and DS and it shows that DS outperforms the other two methods from the point of view of the quality of the reconstruction.
\begin{figure}[ht]	
	\centering
	\includegraphics*[viewport= 44 258 555 573, width=0.7\textwidth]{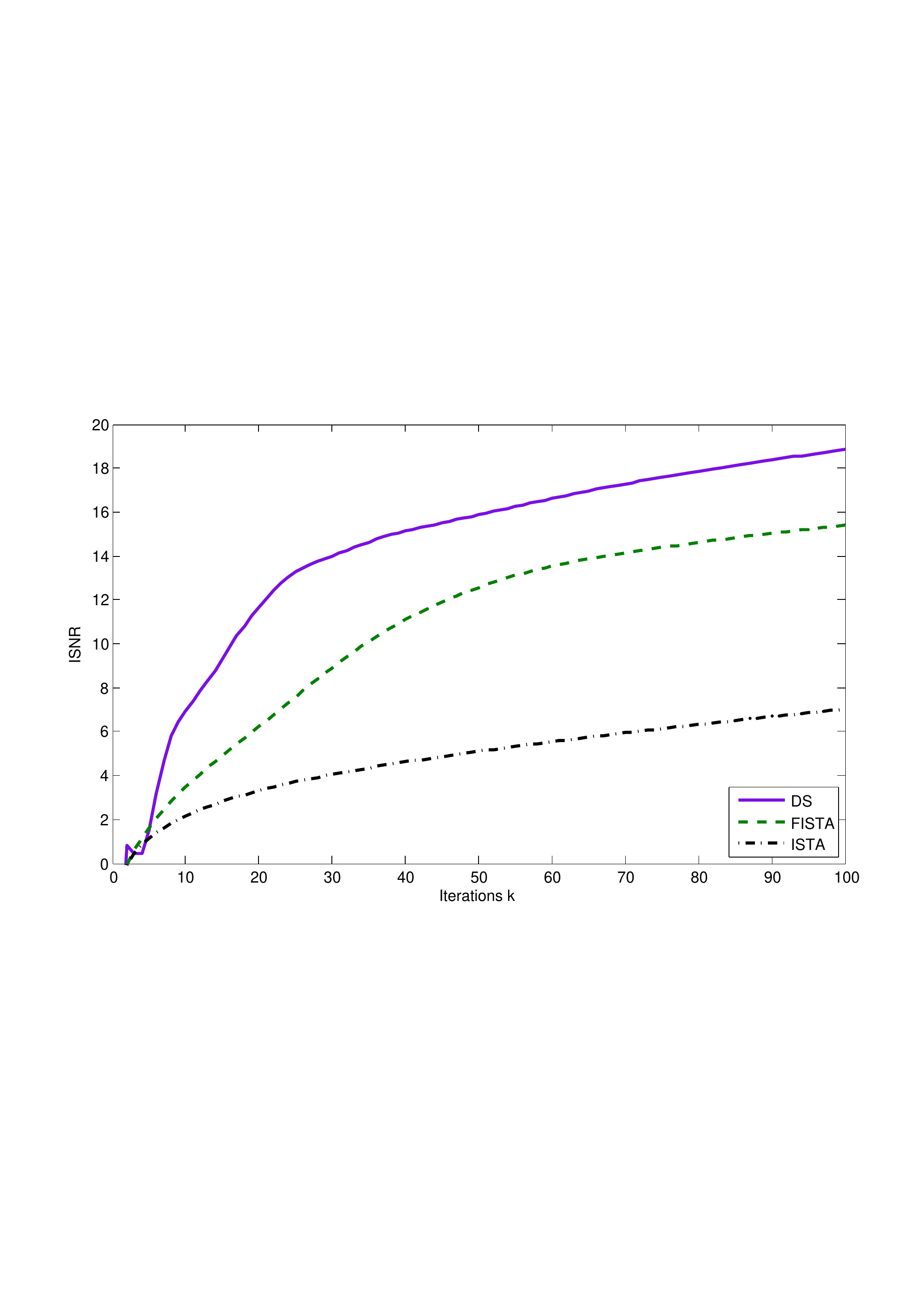}
	\caption{Improvement in signal-to-noise ratio (ISNR)}
	\label{fig:l2_l1_ISNR}	
\end{figure}

\section{Conclusions}

In this article we investigate the possibilities of accelerating the double smoothing technique when solving unconstrained nondifferentiable convex optimization problems. This method, which assumes the minimization of the doubly regularized Fenchel dual objective, allows in the most general case to reconstruct an approximately optimal primal solution in $O\left(\frac{1}{\epsilon} \ln\left( \frac{1}{\epsilon}\right)\right)$ iterations. We show that under some appropriate assumptions for the functions involved in the formulation of the problem to be solved this convergence rate can be improved to $O\left(\frac{1}{\sqrt{\epsilon}} \ln\left( \frac{1}{\epsilon}\right)\right)$, or even to $O\left(\ln\left( \frac{1}{\epsilon}\right)\right)$.

\end{document}